\newtheorem{theorem}{Theorem}[section]
\newtheorem{lemma}{Lemma}[section]
\newtheorem{assumption}{Assumption}[section]
\newtheorem{corollary}{Corollary}[section]
\newtheorem*{remark}{Remark}
\newtheorem*{proof}{Proof}
\renewcommand{\d}{\mathrm{d}}
\newcommand{\W}{\mathcal{W}}
\newcommand{\Ge}{\geqslant}
\newcommand{\Le}{\leqslant}
\renewcommand{\P}{\mathcal{P}}
\newcommand{\D}{\mathcal{D}}
\newcommand{\C}{\mathcal{C}}
\newcommand{\E}{\mathbb{E}}
\newcommand{\di}{\displaystyle}
\numberwithin{equation}{section}
\title{Ergodicity and long-time behavior of the Random Batch Method for interacting particle systems}
\newcommand{\ShangJiao}{School of Mathematical Sciences, Institute of Natural Sciences, MOE-LSC, Shanghai Jiao Tong University, Shanghai, 200240, P. R. China. }
\newcommand{\BeiDa}{Beijing International Center for Mathematical Research, Peking University, Beijing, 100871, P. R. China. }
\author{%
Shi Jin\thanks{\ShangJiao Email: shijin-m@sjtu.edu.cn}
\and
Lei Li\thanks{School of Mathematical Sciences, Institute of Natural Sciences, Qing Yuan Research Institute, MOE-LSC, Shanghai Jiao Tong University, Shanghai, 200240, P. R. China. Email: leili2010@sjtu.edu.cn}
\and
Xuda Ye\thanks{\BeiDa Email: abneryepku@pku.edu.cn}
\and
Zhennan Zhou\thanks{\BeiDa Email: zhennan@bicmr.pku.edu.cn}
}
\begin{document}
\maketitle
\begin{abstract}
We study the geometric ergodicity and the long time behavior of the Random Batch Method for interacting particle systems, which exhibits superior numerical performance in recent large-scale scientific computing experiments. We show that for both the interacting particle system (IPS) and the random batch interacting particle system (RB--IPS), the distribution laws converge to their respective invariant distributions exponentially, and the convergence rate does not depend on the number of particles $N$, the time step $\tau$ for batch divisions or the batch size $p$. Moreover, the Wasserstein distance between the invariant distributions of the IPS and the RB--IPS is bounded by $O(\sqrt{\tau})$, showing that the RB--IPS can be used to sample the invariant distribution of the IPS accurately  with greatly reduced computational cost.
\end{abstract}
\begin{small}
\textbf{Keywords} random batch method, interacting particle system, geometric ergodicity, reflection coupling, strong error estimation 
\end{small}
\\[6pt]
\begin{small}
\textbf{AMS subject classifications} 65C20, 37M05
\end{small}
\section{Introduction}
\label{section:introduction}
Simulation of large-size dynamical systems has always been a computational bottleneck in optimization and stochastic sampling. One of the main difficulties is that the complexity of updating in a single time step is extremely high, which is often beyond the linear scaling with respect to the size of the system. In the past years, various approximate simulation methods have been developed to reduce the computational cost with tolerable numerical error, for example, the stochastic gradient descent (SGD)\cite{SGD_original} and the stochastic gradient Langevin dynamics (SGLD)\cite{SGLD_original}. 
These methods have been widely used in machine learning for efficient simulation, and one may refer to \cite{convergence_SGD_1,convergence_SGD_2,convergence_SGD_3,convergence_SGLD_1,convergence_SGLD_2} for the error analysis.

In this work, we focus on the interacting particle system (IPS), which is of vital importance in computational physics\cite{compute_physics_1,compute_physics_2} and computational chemistry\cite{frenkel2001,lelievre2016}. The study of their  mean-field limits have also been of significant recent research interest\cite{mean_field_1,mean_field_2,mean_field_3,JabinWang}.
Consider a system of $N$ particles represented by a collection of position variables $X_t = \{X_t^i\}_{i=1}^N$ with the position of each particle $X_t^i\in\mathbb R^{d}$, and the system of particles is evolved by the overdamped Langevin dynamics:
\begin{equation}
	\d X_t^i = b(X_t^i)\d t + \frac1{N-1}
	\sum_{j\neq i} K(X_t^i - X_t^j)\d t + \sigma\d W_t^i,~~~~
	i=1,\cdots,N.
	\label{IPS}
\end{equation}
Here, $b(\cdot):\mathbb R^d\rightarrow\mathbb R^d$ is the drift force, $K(\cdot):\mathbb R^d\rightarrow\mathbb R^d$ is the interaction force, $\sigma>0$ is a scalar constant, and $W_t = \{W_t^i\}_{i=1}^N$ are $N$ independent standard Wiener processes in $\mathbb R^d$.

With certain additional assumptions on the parameters, there exists an invariant distribution $\mu$ in $\mathbb R^{Nd}$ associated with the IPS \eqref{IPS}, and thus \eqref{IPS} can be utilized to produce samples of $\mu$ by time integration.
If the drift force $b(x) = -\nabla U(x)$ and the interaction force $K(x) = -\nabla V(x)$ for
some potential functions $U(x),V(x)$ with $\sigma = \sqrt{2}$ and $V(x)$ being even, then the invariant distribution $\mu$ can be explicitly expressed as
\begin{equation}
	\mu(\d x) \propto \exp\bigg(
		-\sum_{i=1}^N U(x^i) - 
		\frac1{N-1}\sum_{1\Le i<j\Le N}
		V(x^i-x^j)
	\bigg)\d x.
	\label{invariant_mu}
\end{equation}
To simulate the IPS \eqref{IPS} numerically, one has to discretize \eqref{IPS} in time and applies numerical integration in each time step. For an IPS of $N$ particles, it requires $O(N^2)$ complexity to compute all the interaction forces $\{K(X_t^i - X_t^j)\}_{i\neq j}$, hence the computational cost per time step is $O(N^2)$, which results in inefficiency of the simulation. Therefore, it is desirable to apply an approximate simulation method which is able to reduce the computational cost and still produce reliable samples of the invariant distribution $\mu$.

The Random Batch Method (RBM) proposed in \cite{RBM_original} is a simple random algorithm to reduce the computational cost from $O(N^2)$ to $O(N)$ in numerically evolving the IPS \eqref{IPS}.
As supported by extensive numerical tests\cite{RBM_sample_1,RBM_sample_2,RBM_sample_3}, the RBM is not only an efficient algorithm for the evolution of the system, it can also serve as an efficient simulation tool which preserves $\mu$ as its invariant distribution in an approximate sense, thus can be used as a sampling algorithm to obtain statistical samples of the invariant  measure of the IPS \eqref{IPS}. Yet, theoretical justification for the sampling accuracy is still lacking.

The idea of the RBM is illustrated as follows. Let $\tau>0$ be the time step for batch division and define $t_n :=n\tau$. 
For each $n\Ge0$, let the index set $\{1,\cdots,N\}$ be randomly divided into $q$ batches $\D=\{\C_1,\cdots,\C_q\}$, where each batch $\C\in\D$ has size $p = N/q$. The IPS \eqref{IPS} within the time interval $t\in[t_n,t_{n+1})$ is approximated as the SDE of $\tilde X_t = \{\tilde X_t\}_{i=1}^N$ in $\mathbb R^{Nd}$, given by
\begin{equation}
	\d \tilde X_t^i = b(\tilde X_t^i)\d t + 
	\frac1{p-1}
	\sum_{j\neq i,j\in\C}
	K(\tilde X_t^i - \tilde X_t^j)\d t + 
	\sigma\d W_t^i,~~
	i\in \C,~~t\in[t_n,t_{n+1}),
	\label{RBM_IPS}
\end{equation}
where $\C\in\D$ is the batch which contains $i$.
For the next time interval, the previous division $\D$ is discarded and another random division $\D'$ is employed to form the dynamics \eqref{RBM_IPS}.
We also point out that the RBM is not only a numerical method for the IPS \eqref{IPS}. It is also a stochastic model for interacting particle systems, in which particles interact, within each time interval of length $\tau$, with a small number ($p-1$) of particles.
In the following, the dynamical system \eqref{RBM_IPS} will be referred to as the random batch interacting particle system (RB--IPS), as a comparison to the IPS \eqref{IPS}. For the convenience of analysis, assume both \eqref{IPS}\eqref{RBM_IPS} are exactly integrated in time, thus there is no error due to numerical discretization for the time derivative.

If one numerically integrates \eqref{IPS}\eqref{RBM_IPS} in each time step, the RB--IPS is able to reduce the computational cost per time step from $O(N^2)$ to $O(Np)$,
because one only needs to compute the interaction forces within each batch $\C$ to update \eqref{RBM_IPS} in a single time step.
Since one requires the batch $\C$ to capture the binary interactions in the IPS, the least choice of the batch size is $p=2$.

The goal of this paper is to answer:
does the RB--IPS \eqref{RBM_IPS} produce accurate samples of the invariant distribution $\mu$?
Specifically, our question is two-fold:
\begin{enumerate}
\item Does the RB--IPS \eqref{RBM_IPS} has an invariant distribution $\tilde\mu$ in $\mathbb R^{Nd}$?
\item If so, what is the difference between the invariant distributions $\mu$ and $\tilde\mu$?
\end{enumerate} 

In general, the analysis of invariant distributions (which is for the long-time behavior) of the stochastic process, is more challenging than the analysis of strong and weak error in finite time. The strong and weak error analysis for the RB--IPS \eqref{RBM_IPS} has been systematically studied in \cite{RBM_error}, while the theoretical understanding of the invariant distribution is very limited, except in a random batch consensus model\cite{ha2021convergence}. 
Intuitively, we expect the trajectory $\tilde X_t$ generated by the RB--IPS \eqref{RBM_IPS}
is a good approximation to $X_t$ generated by the IPS \eqref{IPS}, since the RB--IPS provides an unbiased approximation of the interaction forces:
\begin{equation}
	\E\bigg(
		\frac1{p-1}\sum_{j\neq i,j\in\C}
		K(x^i-x^j)
	\bigg) = 
	\frac1{N-1}
	\sum_{j\neq i}
	K(x^i-x^j),~~~~
	\forall x\in\mathbb R^{Nd},
	\label{unbiased_condition}
\end{equation}
where $i$ is a fixed index in $\{1,\cdots,N\}$, and the remaining $(p-1)$ elements of the batch $\C$ are randomly chosen from $\{1,\cdots,N\}\backslash \{i\}$.
The unbiased feature \eqref{unbiased_condition} of the RB--IPS is very similar to the SGD and the SGLD.
Unfortunately, \eqref{unbiased_condition} is not sufficient to give the long-time behavior of the RB--IPS. Essentially, we lack the knowledge of the ergodicty of the RB--IPS.

The geometric ergodicity of a general stochastic process depicts how fast the distribution law converges to the invariant distribution. For the overdamped and the underdamped Langevin dynamics, the classical approaches to derive geometric ergodicity include the hypocoercivity method\cite{hypocoercivity_1,lelievre2016,hypocoercivity_3}, functional inequalities \cite{fi_1,fi_2} and the Harris ergodic theorem\cite{framework_2,Harris_1,Harris_2,Harris_3,Harris_4}. However, it is not clear how these approaches could be applied to the RB--IPS \eqref{RBM_IPS}. The main difficulty of the RB--IPS is that, the structure of the SDE varies in different time steps, preventing direct analysis of the generator.

Recently, the reflection coupling\cite{reflection_original,reflection_rate} has been employed to prove the geometric ergodicity of the overdamped Langevin dynamics, which is rather different from the classical PDE approaches. The basic idea of reflection coupling is to couple the Wiener processes of two dynamics $X_t,Y_t$ in a specially designed regime, and prove the distance  $\E[\rho(X_t,Y_t)]$ decays exponentially in time.
In particular, the reflection coupling does not require the strong convexity of the potential function $U(x)$ in \eqref{invariant_mu}. 
So far, the reflection coupling has been employed to prove the geometric ergodicity of a large variety of dynamical systems: second-order Langevin dynamics\cite{reflection_Langevin}, Hamiltonian Monte Carlo\cite{reflection_HMC_1,reflection_HMC_2}, the Andersen dynamics\cite{reflection_Andersen} and the McKean-Vlasov process\cite{reflection_mean_field}. In particular, it has been proved in \cite{reflection_rate} that the convergence rate of the IPS \eqref{IPS} does not depend on the number of particles $N$.

Using the geometric ergodicity together with the Banach fixed point theorem yields the existence of the invariant distribution $\tilde\mu$ of the RB--IPS, thus answers the first question. For the second question, we shall employ the general framework described below to estimate the difference between $\mu$ and $\tilde\mu$.
Denote the transition kernels of the IPS \eqref{IPS} and the RB--IPS \eqref{RBM_IPS} by $p_t$ and $\tilde p_t$, respectively.
After choosing a distance function $d(\cdot,\cdot)$ of probability distributions, the estimate of $d(\mu,\tilde\mu)$ relies on two key conclusions:
\begin{enumerate}
\item \textbf{Geometric ergodicity.} For the transition kernels of the IPS and the RB--IPS, there exists a constant $c>0$ such that
\begin{equation}
	d(\mu p_t,\nu p_t) \Le e^{-ct} d(\mu,\nu),~~~
	d(\mu \tilde p_t,\nu \tilde p_t) \Le e^{-ct} d(\mu,\nu),~~~
	\forall t\Ge0
\end{equation}
for any probability distributions $\mu,\nu$ in $\mathbb R^{Nd}$. The geometric ergodicity can be derived using reflection coupling.
\item \textbf{Finite-time error estimation.} Roughly speaking, we aim to prove
\begin{equation}
	\sup_{0\Le t\Le T}
	d(\nu p_t,\nu \tilde p_t) \Le C(T) \tau^\alpha
	\label{finite_time_error}
\end{equation}
for given initial distribution $\nu$ and some exponent $\alpha>0$, where the constant $C(T)$ depends on simulation time $T$. The strong error estimation derived in \cite{RBM_error} implies (\ref{finite_time_error}) with $d(\cdot,\cdot)$ being the Wasserstein distance and $\alpha = 1/2$.
\end{enumerate}
Using these conclusions, $d(\mu,\tilde\mu)$ can be estimated as follows. For any $t\Ge0$, one has the triangle inequality
\begin{align}
	d(\mu,\tilde\mu) & =
	d(\mu p_t,\tilde\mu\tilde p_t) \notag \\
	& \Le d(\mu p_t,\mu\tilde p_t) + d(\mu\tilde p_t,\tilde\mu \tilde p_t) \notag \\
	& \Le d(\mu p_t,\mu\tilde p_t) + e^{-ct}d(\mu,\tilde\mu).
	\label{triangle_inequality}
\end{align}
By choosing $t$ satisfying $e^{-ct}=1/2$, one obtains
\begin{equation}
	d(\mu,\tilde\mu) \Le C \cdot d(\mu p_t,\mu \tilde p_t) \Le C(t) \tau^\alpha.
	\label{d_order_alpha}
\end{equation}
The triangle inequality \eqref{triangle_inequality}, inspired from \cite{framework_1,framework_2,framework_3}, is the key step in the framework of estimating $d(\mu,\tilde\mu)$. The logic behind this framework is simple: geometric ergodicity and finite time error estimation imply error in invariant distributions.

Our main result in in this paper is briefly described below. Under appropriate conditions on the drift force $b(\cdot)$ and the interaction force $K(\cdot)$, the RB--IPS has geometric ergodicity and the convergence rate does not depend on the number of particles $N$, the time step $\tau$ or the batch size $p$. Also, the Wasserstein distance between $\mu,\tilde\mu$ is estimated as
\begin{equation}
	\W_1(\mu,\tilde\mu) \Le C\tau^{\frac12},
\end{equation}
where the constant $C$ does not depend on $N,\tau,p$, and the Wasserstein distance $\W_1(\cdot,\cdot)$ is defined in \eqref{W_1_distance}.
We would like to point out that our result shows that the RBM, even as an approximation to the invariant measure---which corresponds to the steady state of the system---has a convergence rate \emph{independent of} $N$.

The paper is organized as follows. Section \ref{section:ergodicity} proves the geometric ergodicity of both the IPS \eqref{IPS} and the RB--IPS \eqref{RBM_IPS}. Section \ref{section:error} proves of existence of invariant distributions and the strong error estimation in finite time, then estimates the difference between the invariant distributions of the IPS and the RB--IPS.
\section{Geometric Ergodicity of RB--IPS}
\label{section:ergodicity}

In this section we prove the geometric ergodicity of the RB--IPS \eqref{RBM_IPS}, and the main technique is the reflection coupling\cite{reflection_original,reflection_rate}. Following the methodology in \cite{reflection_rate}, we first study the geometric ergodicity of a general multiparticle system: the product model, then apply the results to the IPS \eqref{IPS} and the RB--IPS \eqref{RBM_IPS}.

The product model refers to the stochastic process of the particle system $X_t = \{X_t^i\}_{i=1}^N$ in $\mathbb R^{Nd}$, which is given by the SDE
\begin{equation}
	\d X_t^i = b^i(X_t)\d t + \sigma\d W_t,~~~~
	i=1,\cdots,N.
	\label{product_model}
\end{equation}
where $b^i(\cdot):\mathbb R^{Nd}\rightarrow\mathbb R^d$ is the total force exerted on the $i$-th particle.
The product model is so named because it is defined on the product space $\mathbb R^{Nd} = \otimes_{i=1}^N \mathbb R^d$.
Assume $b^i(x)$ is given by
\begin{equation}
	b^i(x) = b(x^i) + \gamma^i(x),~~~~
	i=1,\cdots,N,
\end{equation}
where $\gamma^i(\cdot):\mathbb R^{Nd}\rightarrow\mathbb R^d$ is the perturbation on the common drift force $b(\cdot):\mathbb R^d\rightarrow\mathbb R^d$ exerted on each particle.
Formally, the IPS \eqref{IPS} and the RB--IPS \eqref{RBM_IPS} can be unified in the product model \eqref{product_model}. In fact, the product model directly becomes the IPS by choosing
\begin{equation}
	\gamma^i(x) = \frac1{N-1}
	\sum_{j\neq i} K(x^i-x^j),~~~~
	i=1,\cdots,N.
	\label{gamma_IPS}
\end{equation}
Within each time interval $[t_n,t_{n+1})$, the RB--IPS can be viewed as the product model with
\begin{equation}
	\gamma^i(x) = \frac1{p-1}
		\sum_{j\neq i,j\in\C} K(x^i-x^j),~~~~
		i\in\C,
	\label{gamma_RBM}
\end{equation}
where $\C$ is the batch which contains $i$. Note that $\gamma^i(x)$ in the RB-IPS varies in every time step due to the use of random batches, but we have suppressed the appearance of such dependence for simplicity.

In the following, we shall use the notation $X_t = \{X_t^i\}_{i=1}^N$ to represent both the IPS \eqref{IPS} and the product model \eqref{product_model}, and the notation $\tilde X_t = \{\tilde X_t^i\}_{i=1}^N$ to represent the RB--IPS \eqref{RBM_IPS}. Using the same notation for the IPS and the product model will not be ambiguous since the the two dynamics are directly related by \eqref{gamma_IPS}.

\subsection{Product model}

We prove the geometric ergodicity of the product model \eqref{product_model}. 
Basically, we shall show that the transition kernel $p_t$ of the product model is contractive, i.e., for some $c>0$ it holds that
\begin{equation}
	d(\mu p_t,\nu p_t) \Le e^{-ct} d(\mu,\nu)
	\label{d_contractivity}
\end{equation}
for any probability distributions $\mu,\nu$ in $\mathbb R^{Nd}$.
The constant $c$ is also referred to as the \emph{contraction rate} of the dynamics.
The contractivity \eqref{d_contractivity} can be achieved by considering a coupled dynamics $\{(X_t,Y_t)\}_{t\Ge0}$ in $\mathbb R^{Nd}\times\mathbb R^{Nd}$, which is described as:
\begin{enumerate}
\item The initial values $X_0\sim\mu$ and $Y_0\sim\nu$ (not necessarily independent);
\item Both $\{X_t\}_{t\Ge0}$ and $\{Y_t\}_{t\Ge0}$ are weak solutions to the product model \eqref{product_model};
\item $X_t,Y_t$ are driven by two Wiener processes $W_t^X,W_t^Y$ respectively, while $W_t^X,W_t^Y$ are coupled in a specific regime.
\end{enumerate}
The coupled dynamics $\{(X_t,Y_t)\}_{t\Ge0}$ can also be written as the SDE
\begin{equation}
\left\{
\begin{aligned}
	\d X_t^i & = b^i(X_t)\d t + \sigma\d W_t^{X,i} \\
	\d Y_t^i & = b^i(Y_t)\d t + \sigma\d W_t^{Y,i}
\end{aligned}
\right.~~~~
i=1,\cdots,N,
\label{coupled_dynamics}
\end{equation}
where $W_t^{X,i},W_t^{Y,i}$ are the $i$-th arguments of the Wiener processes $W_t^X,W_t^Y$ in $\mathbb R^{Nd}$. If one proves for some distance function $\rho(\cdot,\cdot)$ in $\mathbb R^{Nd}\times\mathbb R^{Nd}$,
the expectation $\E[\rho(X_t,Y_t)]$ has exponential decay in time, i.e., for some $c>0$ it holds that
\begin{equation}
	\E[\rho(X_t,Y_t)] \Le 
	e^{-ct} \E[\rho(X_0,Y_0)],
	\label{rho_contractivity}
\end{equation}
then the contractivity \eqref{d_contractivity} holds with $d(\cdot,\cdot)$ being the Wasserstein distance
\begin{equation}
	d(\mu,\nu) := \inf_{\gamma\in\Pi(\mu,\nu)}
	\int_{\mathbb R^{Nd}\times\mathbb R^{Nd}}
	\rho(x,y) \gamma(\d x\d y),
\end{equation}
where $\Pi(\mu,\nu)$ is the set of joint distributions in $\mathbb R^{Nd}\times\mathbb R^{Nd}$ with marginal distributions $\mu,\nu$. The concept of Wasserstein distance has been widely adopted in optimal transport\cite{optimal_transport_1,optimal_transport_2}, where $\Pi(\mu,\nu)$ is known as the set of transport plans.

In the definition of the coupled dynamics $\{(X_t,Y_t)\}_{t\Ge0}$, we expect the coupling scheme between the Wiener processes $W_t^X,W_t^Y$ attracts $X_t,Y_t$ together so that the estimate \eqref{rho_contractivity} holds. Note that the coupling scheme between $W_t^X,W_t^Y$ deos not impact the fact that $X_t\sim \mu p_t$ and $Y_t\sim \nu p_t$, as long as one fixes the initial distributions $\mu,\nu$. In other words, the choice of the coupling scheme between $W_t^X,W_t^Y$ is flexible in the proof of contractivity \eqref{d_contractivity}. Therefore, our goal is to find an appropriate coupling scheme between $W_t^X,W_t^Y$ so that \eqref{rho_contractivity} holds.

The simplest coupling scheme is $W_t^X = W_t^Y$, which is also known as the synchronous coupling\cite{reflection_original}. The synchronous coupling can be used to prove the contractivity \eqref{rho_contractivity} when $b(x) = -\nabla U(x)$ and the potential function $U(x)$ is strongly convex. Unfortunately, the synchronous coupling cannot directly apply to the general case when $U(x)$ is not convex.

Another choice is the reflection coupling.
In \cite{reflection_original}, the reflection coupling is used to prove the contractivity of the overdamped Langevin dynamics of a single particle. Later in \cite{reflection_rate}, this approach is used to prove the contractivity of the product model \eqref{product_model}. In this paper we shall review the reflection coupling for the product model and generalize the results to the IPS and the RB--IPS.

Consider the coupling scheme for the product model \eqref{product_model} introduced in \cite{reflection_rate}.
For this $N$-particle system, each pair of particles $\{(X_t^i,Y_t^i)\}_{t\Ge0}$ is evolved by
\begin{equation}
\left\{
\begin{aligned}
	\d X_t^i & = b^i(X_t)\d t + \sigma\lambda(Z_t^i)\d W_t^i + \sigma\pi(Z_t^i)\d \tilde W_t^i \\
	\d Y_t^i & = b^i(Y_t)\d t + \sigma\lambda(Z_t^i)(I-2e_t^i(e_t^i)^\mathrm{T})\d W_t^i 
	+ \sigma\pi(Z_t^i)\d \tilde W_t^i
\end{aligned}
\right.~~~~
i=1,\cdots,N,
\label{product_couple}
\end{equation}
where $Z_t^i = X_t^i - Y_t^i$, $e_t^i = Z_t^i/|Z_t^i|$, and $\{W_t^i\}_{i=1}^N,\{\tilde W_t^i\}_{i=1}^N$ are independent Wiener processes in $\mathbb R^d$.
Besides, $\lambda(z),\pi(z)$ are smooth functions satisfying
\begin{equation}
	\lambda^2(z) + \pi^2(z) = 1,~~~~
	\forall z\in\mathbb R^d
	\label{lambda_pi}
\end{equation}
with $\lambda(z) = 0$ for $|z|\Le\delta/2$ and $\lambda(z) = 1$ for $|z|\Ge \delta$. Clearly, for each $i\in\{1,\cdots,N\}$, the dynamics $X_t^i,Y_t^i$ in $\mathbb R^{d}$ are driven by the stochastic processes
\begin{align}
	W_t^{X,i} & = \int_0^t \big(
	\lambda(Z_s^i)\d W_s^i + 
	\pi(Z_s^i)\d \tilde W_s^i
	\big), \\
	W_t^{Y,i} & = 
	\int_0^t \big(
	\lambda(Z_s^i) (I-2e_s^i(e_s^i)^\mathrm{T}) \d W_s^i + 
	\pi(Z_s^i)\d \tilde W_s^i
	\big),
\end{align}
respectively.
We present some intuitive explanations of the coupled dynamics \eqref{product_couple}:
\begin{enumerate}
\item The coupled dynamics \eqref{product_couple} is a mixture of the synchronous coupling ($\d \tilde W_t^i$) and the reflection coupling ($\d W_t^i$). The matrix $I-2e_t^i(e_t^i)^\mathrm{T}\in\mathbb R^{d\times d}$ is the reflection transform with respect to the normal plane of $e_t^i$, which is the reason the $(\d W_t^i)$ part is called \emph{reflection coupling}.
\item By Levy's characterization\cite{levy}, the normalizing condition \eqref{lambda_pi} ensures that both $W_t^{X,i},W_t^{Y,i}$ are standard Wiener processes in $\mathbb R^d$. Therefore, both dynamics $X_t,Y_t$ are weak solutions to the product model \eqref{product_model}.
\item $\delta>0$ is a free parameter in the definition of the coupled dynamics \eqref{product_couple}.
Since $Z_t^i = X_t^i - Y_t^i$ is the relative displacement between $X_t^i,Y_t^i$, we have:
\begin{itemize}
\item When $|Z_t^i| \Ge \delta$, $\lambda(Z_t^i)\equiv1$ and \eqref{product_couple} is fully reflection coupling. 
\item When $|Z_t^i| \Le \delta/2$, \eqref{product_couple} degenerates to fully synchronous coupling.
\end{itemize}
When $\delta$ is sufficiently small, we expect that $\lambda(z)$ is close to the constant function 1 and thus the reflection coupling dominates the coupled dynamics \eqref{product_couple}.
\end{enumerate}
\begin{remark}
In \cite{reflection_original}, the coupling scheme for a single particle is fully reflection coupling, i.e., $\lambda(z) \equiv 1$.
However, if we simply choose $\lambda(z) \equiv 1$ in the product model \eqref{product_model}, it is inconvenient to define the coupled dynamics after the occurrence of $Z_t^i = 0$. Also as indicated in \cite{reflection_rate},
it is difficult to make the proof of contractivity rigorous when $\lambda(z)\equiv1$.
\end{remark}

From the coupled dynamics \eqref{product_couple}, the displacement $Z_t^i$ satisfies the SDE
\begin{equation}
	\d Z_t^i = (b^i(X_t)-b^i(Y_t))\d t + 
	2\sigma\lambda(Z_t^i)|Z_t^i|^{-1}Z_t^i\d B_t^i,
	\label{SDE_of_Z}
\end{equation}
where $B_t^i$ is the 1D Wiener process defined by
\begin{equation}
	B_t^i = \int_0^t (e_s^i)^\mathrm{T} \d W_s^i.
\end{equation}
Note that the synchronous coupling ($\d \tilde W_t^i$) vanishes in \eqref{SDE_of_Z}, and the diffusion coefficient $\sigma \lambda(Z_t^i)$ comes from reflection coupling ($\d W_t^i$). Since we expect the diffusion term attracts $X_t^i,Y_t^i$ together, the condition $\sigma>0$ is essential in the proof of contractivity.
Let $r_t^i = |Z_t^i|$, then $r_t^i$ satisfies
\begin{equation}
	\d r_t^i =  
	(r_t^i)^{-1}
	Z_t^i\cdot (b^i(X_t) - b^i(Y_t))) \d t + 
	2\sigma\lambda (Z_t^i)\d B_t^i.
	\label{SDE_of_r}
\end{equation}

Choosing a distance function $f(r)\in C^2[0,+\infty)$, by It\^o's formula, one obtains
\begin{equation}
	\d f(r_t^i) = 
	2\sigma \lambda(Z_t^i)
	f'(r_t^i)\d B_t^i
	+ \Big((r_t^i)^{-1}
	Z_t^i\cdot (b^i(X_t) - b^i(Y_t))) f'(r_t^i) + 
	2\sigma^2\lambda^2(Z_t^i) f''(r_t^i)\Big)\d t,
\end{equation}
hence the rate of change for $\E[f(r_t^i)]$ is
\begin{equation}
	\frac{\d}{\d t} \E[f(r_t^i)] = 
	\E\Big(
		(r_t^i)^{-1} Z_t^i \cdot 
		(b^i(X_t) - b^i(Y_t)) f'(r_t^i)
		+ 2\sigma^2 \lambda^2(Z_t^i) f''(r_t^i)
	\Big).
	\label{rate_of_change_f}
\end{equation}
Now define the distance $\rho(\cdot,\cdot)$ between the systems $X_t,Y_t\in\mathbb R^{Nd}$ by
\begin{equation}
	\rho(X_t,Y_t) := 
	\frac1N\sum_{i=1}^N f(r_t^i),
	\label{rho}
\end{equation}
then the rate of change for $\E[\rho(X_t,Y_t)]$ is completely given by \eqref{rate_of_change_f}\eqref{rho}.

In order to prove $\E[\rho(X_t,Y_t)]$ has exponential decay in time as in \eqref{rho_contractivity}, we put some technical assumptions on the drift forces $\{b^i(x)\}_{i=1}^N$. The distance function $f(r)$ will also be chosen according to these assumptions. Since each $b^i(x) = b(x^i) + \gamma^i(x)$, we only need to consider the assumptions on the drift force $b(\cdot)$ and the perturbation $\gamma^i(\cdot)$.

For the drift force $b(\cdot):\mathbb R^d\rightarrow\mathbb R^d$, suppose there is a function $\kappa(r)$ satisfying
\begin{equation}
	\kappa(r) \Le 
	\inf
	\bigg\{
	-\frac{2}{\sigma^2}
	\frac{(x-y)\cdot (b(x) - b(y))}{|x-y|^2}:
	x,y\in\mathbb R^d,
	|x-y|=r
	\bigg\}.
	\label{kappa}
\end{equation}
Roughly speaking, when $b(x) = -\nabla U(x)$, the function $\kappa(r)$ depicts the convexity of the potential function $U(x)$. If the Hessian $\nabla^2 U(x)$ stays positive definite outside a finite spherical region, then $\kappa(r)$ is positive for sufficiently large $r$. Therefore, it is reasonable to require the positivity of $\kappa(r)$.
\begin{assumption}[drift]
\label{assumption:kappa}
The function $\kappa(r)$ defined in \eqref{kappa} satisfies
\begin{enumerate}
\item[\textnormal{1.}] $\kappa(r)$ is continuous for $r\in(0,+\infty)$;
\item[\textnormal{2.}] $\kappa(r)$ has a lower bound for $r\in(0,+\infty)$;
\item[\textnormal{3.}] $\di\varliminf_{r\rightarrow\infty} \kappa(r) >0$.
\end{enumerate}
\end{assumption}
For the perturbation $\gamma^i(\cdot):\mathbb R^{Nd}\rightarrow\mathbb R^d$, assume the Lipschitz condition holds:
\begin{assumption}[perturbation]
\label{assumption:gamma}
There exists a constant $L$ such that
\begin{equation}
	\sum_{i=1}^N 
	|\gamma^i(x) - \gamma^i(y)| \Le
	L \sum_{i=1}^N |x^i-y^i|,~~~~
	\forall x,y\in\mathbb R^{Nd}.
\end{equation}
\end{assumption}
\begin{remark}
If a non-continuous function $\kappa(r)$ satisfies \eqref{kappa} and the latter two conditions in Assumption \textnormal{\ref{assumption:kappa}}, we can find another continious function $\bar\kappa(r)\Le \kappa(r)$ which satisfies all conditions in Assumption \textnormal{\ref{assumption:kappa}}. Therefore, the continuity of $\kappa(r)$ is not an essential condition in Assumption \textnormal{\ref{assumption:kappa}}.
We assume the continuity of $\kappa(r)$ merely for technical convenience.
\end{remark}

Following \cite{reflection_original,reflection_rate}, we choose the distance function $f(r)$ according to the following lemma.
\begin{lemma}[distance]If the function $\kappa(r)$ defined in \eqref{kappa} satisifies Assumption \textnormal{\ref{assumption:kappa}}, then there exists a function $f(r)$ defined in $r\in[0,+\infty)$ such that
\begin{enumerate}
\item[$1.$] $f(0) = 0$, and $f(r)$ is concave and strictly increasing in $[0,+\infty)$;
\item[$2.$] $f(r)\in C^2[0,+\infty)$ and there exists a constant $c_0>0$ such that
\begin{equation}
	f''(r) - \frac14 r\kappa(r) f'(r) \Le - \frac{c_0}2f(r),
	~~~~\forall r\Ge 0
	\label{f_inequality}
\end{equation}
\item[$3.$] There exists a cosntant $\varphi_0>0$ such that
\begin{equation}
	\frac{\varphi_0}4r\Le f(r)\Le r,~~~~\forall r\Ge0.
	\label{f_equivalence}
\end{equation}
\end{enumerate}
The constants $c_0,\varphi_0$ only depend on the function $\kappa(r)$.
\label{lemma:distance}
\end{lemma}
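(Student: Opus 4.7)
My plan is to adapt the classical Eberle-type construction used in \cite{reflection_original,reflection_rate}, which builds $f$ as a concave modification of the identity. The concavity is designed precisely to absorb the negative part of $\kappa$ near the origin, while for large $r$ the positivity of $\kappa$ itself produces the contraction needed in \eqref{f_inequality}.

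The first step is to extract the structural content of Assumption \ref{assumption:kappa}. Setting $K:=\tfrac{1}{2}\varliminf_{r\to\infty}\kappa(r)>0$, there exists $R_1>0$ with $\kappa(r)\Ge K$ for all $r\Ge R_1$, so the negative part $\kappa^-(r):=\max(-\kappa(r),0)$ is bounded and vanishes on $[R_1,\infty)$; in particular $\int_0^\infty s\kappa^-(s)\,\d s<\infty$. I would then introduce the auxiliary weight
\begin{equation*}
    \varphi(r):=\exp\Bigl(-\tfrac{1}{4}\int_0^r s\kappa^-(s)\,\d s\Bigr),\qquad \Phi(r):=\int_0^r\varphi(s)\,\d s,
\end{equation*}
which is non-increasing, satisfies $\varphi(0)=1$, is bounded below by some $\varphi_\infty>0$, obeys $\varphi'(r)=-\tfrac{1}{4}r\kappa^-(r)\varphi(r)$, and is constant on $[R_1,\infty)$.

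Next I would set
\begin{equation*}
    g(r):=1-\tfrac{c_0}{2}\int_0^{\min(r,R_1)}\frac{\Phi(s)}{\varphi(s)}\,\d s,\qquad f(r):=\int_0^r\varphi(s)g(s)\,\d s,
\end{equation*}
with a constant $c_0>0$ chosen small enough that $g(R_1)\Ge\tfrac{1}{2}$. Then $f(0)=0$, and $f'(r)=\varphi(r)g(\min(r,R_1))>0$ is non-increasing, so $f\in C^2[0,\infty)$ is strictly increasing and concave; the equivalence \eqref{f_equivalence} follows from $\varphi_\infty\Le\varphi\Le 1$ and $\tfrac{1}{2}\Le g\Le 1$, with $\varphi_0=2\varphi_\infty$.

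The decisive computation is \eqref{f_inequality}. Using $\varphi'=-\tfrac{1}{4}r\kappa^-\varphi$ together with the identity $\kappa^-(r)+\kappa(r)=\kappa^+(r):=\max(\kappa(r),0)$, direct differentiation yields
\begin{equation*}
    f''(r)-\tfrac{1}{4}r\kappa(r)f'(r)=-\tfrac{1}{4}r\kappa^+(r)\varphi(r)g(\min(r,R_1))-\tfrac{c_0}{2}\Phi(r)\mathbf{1}_{\{r\Le R_1\}}.
\end{equation*}
On $[0,R_1]$ the first term is nonpositive and the second already gives $-\tfrac{c_0}{2}\Phi(r)\Le -\tfrac{c_0}{2}f(r)$, since $f\Le\Phi$. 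On $(R_1,\infty)$ only the first term survives, and because $\kappa\Ge K$, $\varphi=\varphi_\infty$, and $g=g(R_1)$ there, it is bounded above by $-\tfrac{1}{4}K\varphi_\infty g(R_1)\,r\Le-\tfrac{c_0}{2}f(r)$ (using $f\Le r$), provided $c_0\Le\tfrac{1}{2}K\varphi_\infty g(R_1)$. The only real obstacle is to pick $c_0$ small enough to meet both the truncation requirement $g(R_1)\Ge\tfrac{1}{2}$ and this large-$r$ matching condition simultaneously; this reduces to taking $c_0$ as the minimum of two quantities determined solely by $\kappa$, which in turn yields the claim that $c_0$ and $\varphi_0$ depend only on $\kappa$.
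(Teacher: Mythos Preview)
Your construction follows the Eberle template and the key inequality \eqref{f_inequality} is verified correctly on both $[0,R_1]$ and $(R_1,\infty)$. However, there is a genuine gap: the function you build is \emph{not} in $C^2[0,\infty)$, contrary to what you assert. Your $g$ is frozen at the value $g(R_1)$ for $r>R_1$, so $g'(R_1^+)=0$, while $g'(R_1^-)=-\tfrac{c_0}{2}\Phi(R_1)/\varphi(R_1)\neq 0$. Since $\varphi'(R_1)=0$ (because $\kappa^-(R_1)=0$), the second derivative $f''=\varphi'g+\varphi g'$ jumps at $R_1$ by exactly $-\tfrac{c_0}{2}\Phi(R_1)$. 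Thus $f\in C^1$ with $f'$ absolutely continuous, but $f\notin C^2$. The claim ``$f'$ is non-increasing, so $f\in C^2$'' is simply a non sequitur.

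The paper is explicit about this point: it remarks that Eberle's original construction (which is essentially yours) only gives $f\in C^1$, and it modifies the definition of $g$ on $(R_1,\infty)$ precisely to restore $C^2$ regularity. Concretely, instead of keeping $g$ constant beyond $R_1$, the paper sets
\[
g(r)=\tfrac{1}{2}-\frac{\eta(r-R_1)}{1+4\eta(r-R_1)},\qquad r>R_1,\qquad \eta:=-g'(R_1^-),
\]
so that $g$ is $C^1$ across $R_1$, still decreasing, and bounded below by $\tfrac14$. With this choice $f''$ is continuous, and on $(R_1,\infty)$ the extra term $\varphi g'\le 0$ only helps the inequality. The equivalence \eqref{f_equivalence} then holds with $\varphi_0$ as in the paper (note $g\ge\tfrac14$ rather than $\tfrac12$). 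Your argument can be repaired by inserting exactly this smoothing of $g$ beyond $R_1$; everything else you wrote goes through.
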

The choice of distance function $f(r)$ is to produce the negative coefficient in the RHS of \eqref{f_inequality}, and the proof of Lemma \ref{lemma:distance} is left in Appendix. Fig.~\ref{figure:f_distance_function} is an example of the distance function $f(r)$ in the case $\kappa(r) = \max\{r/2\sqrt{2}-1,1\}$, where the graphs of $\kappa(r)$ and $f(r)$ are shown.
\begin{figure}
	\centerline{\includegraphics[width=0.75\textwidth]{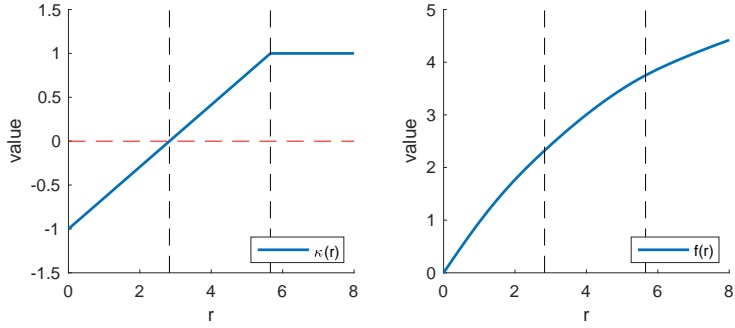}}
	\caption{Graphs of $\kappa(r)$ (left) and $f(r)$ (right), where $f(r)$ is defined according to Lemma \ref{lemma:distance}.}
	\label{figure:f_distance_function}
\end{figure}
In Fig. \ref{figure:f_distance_function}, we observe that $f(r)$ is concave for small $r$ and almost linear for large $r$. Here is an intuitive explanation how this feature of $f(r)$ is related to the inequality \eqref{f_inequality}. For simplicity, assume the drift force $b(x) = -\nabla U(x)$.
When $r_t^i = |X_t^i-Y_t^i|$ is large, the particles $X_t^i,Y_t^i$ are attracted together by their common drift force. When $r_t^i$ is small, the concavity of $f(r)$ makes the quantity $f(r_t^i)$ more sensitive to the decreasing rather than increasing of the relative distance $r_t^i$, and thus we can expect the decreasing of $\E[f(r_t^i)]$ even without the global convexity of the potential function $U(x)$.

Using the distance function $f(r)$ defined in Lemma \ref{lemma:distance}, we are able to estimate the rate of change for $\E[\rho(X_t,Y_t)]$. The following lemma is a key step to derive the estimation:
\begin{lemma}[$f$-inequality]
\label{lemma:sum_f_inequality}
Under Assumptions \textnormal{\ref{assumption:kappa}} and \textnormal{\ref{assumption:gamma}}, let $f(r)$ be the distance function given in Lemma \textnormal{\ref{lemma:distance}}.
Given $\delta>0$, let $\lambda(z)$ be a smooth continuous function with $|\lambda(z)|\Le 1$ and $\lambda(z)=1$ for $|z|\Ge \delta$. If the Lipschitz constant $L$ in Assumption \textnormal{\ref{assumption:gamma}} satisifies
$$
	L < \frac{c_0\varphi_0\sigma^2}{8},
$$
then the following inequality holds with $c := c_0\sigma^2/2$,
\begin{equation}
	\sum_{i=1}^N \bigg(
	(r^i)^{-1}Z^i \cdot (b^i(X) - b^i(Y))f'(r^i) + 
	2\sigma^2 \lambda^2(Z^i) f''(r^i) 
	\bigg)
	\Le 
	Nm(\delta) - c\sum_{i=1}^N f(r^i),
	\label{sum_f_inequality}
\end{equation}
where $X,Y\in\mathbb R^{Nd}$, $Z = X-Y$, $r^i = |Z^i|$ and $m(\delta)$ is defind by
\begin{equation}
	m(\delta) = \frac{\sigma^2}{2} \sup_{r<\delta}
		\Big(r\kappa(r)^-\Big) 
		+ c_0 \sigma^2 \delta.
	\label{m_delta}
\end{equation}
$x^- = -\min\{x,0\}$ denotes the negative part of $x\in\mathbb R$.
\end{lemma}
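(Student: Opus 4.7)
The plan is to estimate each summand separately, exploiting the decomposition $b^i(x) = b(x^i) + \gamma^i(x)$, then combine using the contraction properties of $f$ supplied by Lemma \ref{lemma:distance} and the Lipschitz hypothesis on $\gamma^i$.

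First I would bound the drift contribution coming from $b$. By the definition of $\kappa$ in \eqref{kappa}, for each $i$,
\begin{equation*}
(r^i)^{-1}Z^i\cdot(b(X^i)-b(Y^i))\,f'(r^i)\Le -\frac{\sigma^2}{2}\kappa(r^i)\,r^i f'(r^i),
\end{equation*}
which is valid irrespective of the sign of $\kappa(r^i)$ because $f'(r^i)\Ge 0$. Next I would split the analysis into two regimes according to whether $r^i\Ge\delta$ or $r^i<\delta$. In the \emph{outer} regime $r^i\Ge\delta$ we have $\lambda(Z^i)=1$, so the Itô term is $2\sigma^2 f''(r^i)$; plugging in the ODE inequality \eqref{f_inequality} from Lemma \ref{lemma:distance} cancels the $\kappa$-contribution exactly and yields the clean bound
\begin{equation*}
-\frac{\sigma^2}{2}\kappa(r^i)r^if'(r^i)+2\sigma^2 f''(r^i)\Le -c_0\sigma^2 f(r^i).
\end{equation*}
In the \emph{inner} regime $r^i<\delta$, the factor $\lambda^2(Z^i)$ may be strictly less than $1$, so the exact cancellation fails; instead I would throw away the diffusion term using concavity ($f''\Le 0$ gives $2\sigma^2\lambda^2 f''\Le 0$) and control the drift by the crude estimate $-\frac{\sigma^2}{2}\kappa(r^i)r^if'(r^i)\Le \frac{\sigma^2}{2}r^i\kappa(r^i)^- f'(r^i)\Le \frac{\sigma^2}{2}\sup_{r<\delta}(r\kappa(r)^-)$, where I use $f'\Le 1$ (which follows from $f(r)\Le r$, $f(0)=0$, concavity). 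Combined with $c_0\sigma^2 f(r^i)\Le c_0\sigma^2\delta$ for $r^i<\delta$, this gives the uniform per-particle bound
\begin{equation*}
(r^i)^{-1}Z^i\cdot(b(X^i)-b(Y^i))f'(r^i)+2\sigma^2\lambda^2(Z^i)f''(r^i)\Le m(\delta)-c_0\sigma^2 f(r^i),
\end{equation*}
valid in both regimes.

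For the perturbation term I would use the Cauchy–Schwarz bound $(r^i)^{-1}|Z^i\cdot(\gamma^i(X)-\gamma^i(Y))|\Le|\gamma^i(X)-\gamma^i(Y)|$, combined with $f'\Le 1$, Assumption \ref{assumption:gamma}, and the lower bound $f(r)\Ge \varphi_0 r/4$ from \eqref{f_equivalence}, to obtain
\begin{equation*}
\sum_{i=1}^N (r^i)^{-1}Z^i\cdot(\gamma^i(X)-\gamma^i(Y))f'(r^i)\Le L\sum_{i=1}^N r^i\Le \frac{4L}{\varphi_0}\sum_{i=1}^N f(r^i).
\end{equation*}
Summing over $i$ and combining the two estimates yields
\begin{equation*}
\sum_{i=1}^N(\cdots)\Le Nm(\delta)-\Big(c_0\sigma^2-\tfrac{4L}{\varphi_0}\Big)\sum_{i=1}^N f(r^i),
\end{equation*}
and the hypothesis $L<c_0\varphi_0\sigma^2/8$ makes the coefficient exceed $c_0\sigma^2/2=c$, giving \eqref{sum_f_inequality}.

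The main obstacle, I anticipate, is the inner regime $r^i<\delta$: the partial reflection coupling (only $\lambda(Z^i)\d W_t^i$) destroys the exact cancellation between the $\kappa(r)rf'(r)$ contribution of the drift and the $rf''(r)$ contribution of the diffusion that makes the argument work outside $\delta$. Absorbing this mismatch into the additive constant $m(\delta)$ is the essential bookkeeping step, and both summands of $m(\delta)$ arise naturally: $\frac{\sigma^2}{2}\sup_{r<\delta}(r\kappa(r)^-)$ from the possibly indefinite drift and $c_0\sigma^2\delta$ from trading $f(r^i)$ for $\delta$ to maintain the $-c_0\sigma^2 f(r^i)$ decay in the inner region.
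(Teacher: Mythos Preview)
Your proposal is correct and follows essentially the same approach as the paper: the paper also decomposes $b^i=b+\gamma^i$, bounds the $\gamma^i$-part exactly as you do via Assumption~\ref{assumption:gamma} and $f(r)\Ge\varphi_0 r/4$, and uses the ODE inequality \eqref{f_inequality} together with $f''\Le 0$, $f'\Le 1$ to produce the $m(\delta)$ correction. The only cosmetic difference is that the paper writes $\lambda^2=1-(1-\lambda^2)$ and estimates the $(1-\lambda^2)$-remainder, whereas you split directly into the regimes $r^i\Ge\delta$ and $r^i<\delta$; since $1-\lambda^2(Z^i)=0$ precisely when $r^i\Ge\delta$, the two bookkeepings are equivalent.
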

The proof of Lemma \ref{lemma:sum_f_inequality} is left in Appendix, and is similar to the proof of Theorem 7 in \cite{reflection_rate}.
Also note that \eqref{sum_f_inequality} is exactly the condition of Lemma 5 in \cite{reflection_rate}. We present some comments on Lemma \ref{lemma:sum_f_inequality}:
\begin{enumerate}
\item $\delta$ and $\lambda(z)$ in Lemma \ref{lemma:sum_f_inequality} correspond to parameters in the coupled dynamics \eqref{product_couple}. The additional term $m(\delta)$ appears in the RHS of \eqref{sum_f_inequality} because $\lambda(z)$ is not identical to 1, i.e., we are not using fully reflection coupling. By Assumption \ref{assumption:kappa}, $\kappa(r)^-$ is bounded for $r\in(0,+\infty)$, thus one has $\di\lim_{\delta\rightarrow0} m(\delta) = 0$.
\item The validity of \eqref{sum_f_inequality} requires the Lipschitz constant $L$ to be sufficiently small, and the $\sigma^2$ term appears in the upper bound of $L$. In other words, the strength of the diffusion needs to strong enough to control the error due to the perturbation of drift forces. In fact, if $\sigma=0$, the product model \eqref{product_model} as an overdamped Langevin dynamics will degenerate to the gradient flow, and may converge to the local minimizers of the potential function. In this case we cannot expect that \eqref{product_model} has a unique invariant distribution or the contraction property.
\item The distance function $f(r)$, the upper bound of $L$ and the contraction rate $c$ are all independent of $\delta$, thus we may pass $\delta$ to the limit 0 without changing the value of $c$.
\end{enumerate}
Using Lemma \ref{lemma:sum_f_inequality}, we can obtain the contractivity of the coupled dynamics \eqref{product_couple}.
\begin{lemma}[contractivity]
\label{lemma:rho_rate}
Under Assumptions \textnormal{\ref{assumption:kappa}} and \textnormal{\ref{assumption:gamma}}, let $f(r)$ be the distance function defined in Lemma \textnormal{\ref{lemma:distance}}, and $c:=c_0\sigma^2/2$. If the Lipschitz constant $L$ in Assumption \textnormal{\ref{assumption:gamma}} satisfies
$$
	L<\frac{c_0\varphi_0\sigma^2}{8},
$$
then for $\rho_t:=\rho(X_t,Y_t)$ defined in \eqref{rho}, one has
\begin{equation}
	\frac{\d}{\d t}
	\E[\rho_t] \Le m(\delta) - c\cdot \E[\rho_t],~~~~\forall t\Ge0,
	\label{rate_of_change_rho}
\end{equation}
where $m(\delta)$ is defined in \eqref{m_delta}.
\end{lemma}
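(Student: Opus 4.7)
The plan is to assemble Lemma \ref{lemma:rho_rate} directly from the already-derived rate-of-change formula \eqref{rate_of_change_f} together with the pointwise $f$-inequality of Lemma \ref{lemma:sum_f_inequality}, since all the analytic heavy lifting (construction of $f$, control of $\kappa(r)^-$, Lipschitz estimates on $\gamma^i$) has been packaged into those two preceding results.

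First I would sum \eqref{rate_of_change_f} over $i=1,\dots,N$ and divide by $N$. Interchanging the sum with the time derivative and expectation (justified because $f'$ is bounded by $1$ on $[0,\infty)$ via \eqref{f_equivalence} and concavity, so each martingale increment in It\^o's formula has square-integrable integrand) gives
\begin{equation}
\frac{\d}{\d t}\E[\rho_t] \;=\; \frac1N \E\Biggl[\sum_{i=1}^N\Bigl((r_t^i)^{-1} Z_t^i\cdot(b^i(X_t)-b^i(Y_t))\,f'(r_t^i) + 2\sigma^2\lambda^2(Z_t^i)\,f''(r_t^i)\Bigr)\Biggr].
\end{equation}
This is precisely the left-hand side of the $f$-inequality \eqref{sum_f_inequality}, evaluated pathwise at $X=X_t$, $Y=Y_t$.

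Next I would invoke Lemma \ref{lemma:sum_f_inequality} for each sample path. Since the hypotheses match (Assumptions \ref{assumption:kappa} and \ref{assumption:gamma} are in force, $f$ is the one from Lemma \ref{lemma:distance}, the coupling function $\lambda$ in \eqref{product_couple} meets the stated requirements, and $L<c_0\varphi_0\sigma^2/8$ by assumption), the inequality applies with $c=c_0\sigma^2/2$ and yields
\begin{equation}
\sum_{i=1}^N\Bigl((r_t^i)^{-1}Z_t^i\cdot(b^i(X_t)-b^i(Y_t))f'(r_t^i)+2\sigma^2\lambda^2(Z_t^i)f''(r_t^i)\Bigr) \;\Le\; Nm(\delta) - c\sum_{i=1}^N f(r_t^i).
\end{equation}
Taking expectation, dividing by $N$, and recognizing $\rho_t = N^{-1}\sum_i f(r_t^i)$ then gives exactly \eqref{rate_of_change_rho}.

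The only genuinely delicate point is the rigorous justification of \eqref{rate_of_change_f} itself when $r_t^i$ can hit zero: on $\{r_t^i=0\}$ the unit vector $e_t^i$ is undefined and $(r_t^i)^{-1}$ diverges. I would handle this the usual way: since $\lambda(z)=0$ for $|z|\le\delta/2$, the diffusion term in \eqref{SDE_of_Z} vanishes near coincidence so the It\^o correction for $r_t^i=|Z_t^i|$ disappears (the noise is purely radial); and since $b+\gamma^i$ is Lipschitz, $|Z_t^i\cdot(b^i(X_t)-b^i(Y_t))|\Le C|Z_t^i|^2$, so the apparently singular drift $(r_t^i)^{-1}Z_t^i\cdot(b^i(X_t)-b^i(Y_t))$ is in fact bounded and can be continuously extended by $0$ at $r_t^i=0$. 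A standard localization via stopping times $\inf\{t:r_t^i\le\varepsilon\}$ followed by $\varepsilon\downarrow 0$ makes this airtight, but once \eqref{rate_of_change_f} is granted the remainder of the proof is the two-line computation above.
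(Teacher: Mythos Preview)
Your proof is correct and essentially identical to the paper's: sum the rate-of-change formula \eqref{rate_of_change_f} over $i$, apply the pointwise bound from Lemma \ref{lemma:sum_f_inequality}, and take expectations to recover \eqref{rate_of_change_rho}. Your extra discussion of the $r_t^i=0$ singularity goes beyond what the paper provides (the paper simply accepts \eqref{rate_of_change_f} as given); note, however, that your appeal to Lipschitzness of $b+\gamma^i$ is not actually warranted by Assumptions \ref{assumption:kappa} and \ref{assumption:gamma} alone---$b$ need not be globally Lipschitz---so the boundedness of the drift term near coincidence should instead rest on the lower bound for $\kappa(r)$ in Assumption \ref{assumption:kappa} together with $f'\le 1$.
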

\begin{proof}
Since $\rho_t = \sum_{i=1}^N f(r_t^i)/N$, one has
\begin{equation}
	\frac{\d}{\d t}\E[\rho_t] = 
	\frac1N \sum_{i=1}^N \frac{\d}{\d t} \E[f(r_t^i)]
\end{equation}
Using \eqref{rate_of_change_f}, one obtains
\begin{equation}
	\frac{\d}{\d t}\E[\rho_t] = 
	\frac1N\sum_{i=1}^N
	\E\Big(
		(r_t^i)^{-1} Z_t^i \cdot 
		(b^i(X_t) - b^i(Y_t)) f'(r_t^i)
		+ 2\sigma^2 \lambda^2(Z_t^i) f''(r_t^i)
	\Big) 
\end{equation}
Applying the estimate in Lemma \ref{lemma:sum_f_inequality}, one obtains
\begin{equation}
	\frac{\d}{\d t}\E[\rho_t] \Le 
	m(\delta) - c\cdot\E[\rho_t],
\end{equation}
which is exactly the desired result.
\end{proof}
Integrating \eqref{rate_of_change_rho} in the time interval $[0,t)$ gives
\begin{equation}
	\E[\rho_t] \Le 
	e^{-ct}\E[\rho_0] + 
	\frac{m(\delta)(1-e^{-ct})}c,~~~~
	\forall t\Ge0,
	\label{rho_inequality}
\end{equation}
which can be used to derive the contractivity for the probability distributions.

To describe the probability distributions rigorously, introduce the following terminologies.
Let $\P_1$ be the set of probability distributions in $\mathbb R^{Nd}$ with finite first-order moment, i.e.,
\begin{equation}
	\P_1 = \bigg\{
	\mbox{$\mu$ is a probability distribution in $\mathbb R^{Nd}$}:
	\sum_{i=1}^N\int_{\mathbb R^{Nd}}
	|x^i|\mu(\d x) < +\infty
	\bigg\}.
	\label{distribution_P1}
\end{equation}
For probability distributions $\mu,\nu\in \P_1$, define the normalized Wasserstein distances
\begin{align}
	\W_1(\mu,\nu) & = \inf_{\gamma\in\Pi(\mu,\nu)}
	\int_{\mathbb R^{Nd}\times\mathbb R^{Nd}} \bigg(
	\frac1N\sum_{i=1}^N
	|x^i-y^i|\bigg)\gamma(\d x\d y), \\
	\W_f(\mu,\nu) & = \inf_{\gamma\in\Pi(\mu,\nu)}
		\int_{\mathbb R^{Nd}\times\mathbb R^{Nd}} \bigg(
		\frac1N\sum_{i=1}^N
		f(|x^i-y^i|)\bigg)\gamma(\d x\d y).
\end{align}
It is easy to verify $(\P_1,\W_1(\cdot,\cdot))$ is a complete metric space. Note that $f(r)$ does not satisfy the triangle inquality due to concavity, $\W_f$ is only a semimetric. Since $f(r)$ is equivalent to the Euclidean norm, $\W_1$ and $\W_f$ are equivalent as semimetrics.  Using the estimate \eqref{rho_inequality}, we obtain
\begin{theorem}[contractivity]
\label{theorem:product_contractivity}
Under Assumptions \textnormal{\ref{assumption:kappa}} and \textnormal{\ref{assumption:gamma}}, let $f(r)$ be the distance function defined in Lemma \textnormal{\ref{lemma:distance}}, and $c:=c_0\sigma^2/2$.
Let $p_t$ be the transition kernel of the product model \eqref{product_model}.
If the Lipschitz constant $L$ in Assumption \textnormal{\ref{assumption:gamma}} satisfies
$$
	L<\frac{c_0\varphi_0\sigma^2}{8},
$$
then we have
\begin{equation}
	\W_f(\mu p_t,\nu p_t) \Le e^{-ct} \W_f(\mu,\nu),
	~~~~\forall t\Ge0.
	\label{W_f_contractivity}
\end{equation}
for any probability distributions $\mu,\nu\in\P_1$.
\end{theorem}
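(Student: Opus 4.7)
The plan is to lift the expectation bound from Lemma \ref{lemma:rho_rate} to the semimetric $\W_f$ through the reflection coupling \eqref{product_couple}, and then eliminate the cutoff-dependent correction $m(\delta)$ by passing $\delta \to 0$.

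Fix $\delta > 0$ and $\varepsilon > 0$, and choose an $\varepsilon$-optimal coupling $\gamma_\varepsilon \in \Pi(\mu, \nu)$ with
\begin{equation*}
    \int_{\mathbb R^{Nd}\times\mathbb R^{Nd}}\bigg(\frac{1}{N}\sum_{i=1}^N f(|x^i - y^i|)\bigg)\gamma_\varepsilon(\d x\d y) \Le \W_f(\mu, \nu) + \varepsilon,
\end{equation*}
which exists by the definition of $\W_f$ as an infimum. Realize \eqref{product_couple} on a common probability space with $(X_0, Y_0) \sim \gamma_\varepsilon$. By L\'evy's characterization, $X_t$ and $Y_t$ are each weak solutions of \eqref{product_model}, so their marginals equal $\mu p_t$ and $\nu p_t$; the joint law at time $t$ is then an element of $\Pi(\mu p_t, \nu p_t)$, giving $\W_f(\mu p_t, \nu p_t) \Le \E[\rho_t]$ by definition of $\W_f$. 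Integrating the differential inequality \eqref{rate_of_change_rho} yields $\E[\rho_t] \Le e^{-ct}\E[\rho_0] + m(\delta)(1-e^{-ct})/c$, whence
\begin{equation*}
    \W_f(\mu p_t, \nu p_t) \Le e^{-ct}(\W_f(\mu, \nu) + \varepsilon) + \frac{m(\delta)}{c}.
\end{equation*}

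Since $f$, $c$, and the Lipschitz threshold $c_0\varphi_0\sigma^2/8$ are all independent of $\delta$, while Assumption \ref{assumption:kappa} provides a uniform lower bound $\kappa(r) \Ge -M$ so that formula \eqref{m_delta} forces $m(\delta) \to 0$ as $\delta \to 0$, I obtain \eqref{W_f_contractivity} by letting first $\delta \to 0$ and then $\varepsilon \to 0$. The main technical obstacle lies in the well-posedness of the coupled SDE \eqref{product_couple}: the reflection matrix $I - 2e_t^i(e_t^i)^\mathrm{T}$ is discontinuous at $Z_t^i = 0$, but the cutoff $\lambda$ vanishes on $\{|z| \Le \delta/2\}$ and hides this singularity inside a smooth diffusion coefficient. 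This is exactly why the cutoff was built into \eqref{product_couple} in the first place, and why the limit $\delta \to 0$ must be performed at the level of the final estimate rather than at the level of the SDE. A minor auxiliary observation is that $\W_f(\mu, \nu) < \infty$ for $\mu, \nu \in \P_1$ follows from $f(r) \Le r$ in \eqref{f_equivalence}, so no issue arises from the infimum being empty or infinite.
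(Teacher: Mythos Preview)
Your proposal is correct and follows essentially the same approach as the paper: start from an $\varepsilon$-optimal initial coupling, run the reflection-coupled dynamics \eqref{product_couple}, apply the integrated form of Lemma~\ref{lemma:rho_rate}, use the joint law of $(X_t,Y_t)$ as an admissible coupling to bound $\W_f(\mu p_t,\nu p_t)$ from above, and then send $\delta\to 0$ and $\varepsilon\to 0$. Your additional remarks on the role of the cutoff for well-posedness and on the finiteness of $\W_f$ via $f(r)\Le r$ are accurate and helpful but not strictly required for the argument.
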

The proof of Theorem \ref{theorem:product_contractivity} is similar to the proof of Theorem 7 in \cite{reflection_rate}.
\begin{proof}
For given distributions $\mu,\nu\in \P_1$, let $\gamma\in\Pi(\mu,\nu)$ satisfies
\begin{equation}
	\int_{\mathbb R^{Nd}} \bigg(
	\frac1N\sum_{i=1}^N
	f(|x^i-y^i|)\bigg)\gamma(\d x\d y) \Le \W_f(\mu,\nu) + \varepsilon,
\end{equation}
where $\varepsilon>0$ is an arbitrary small constant. Let $\{(X_t,Y_t)\}_{t\Ge0}$ evolved by the coupled dynamics \eqref{product_couple} with the initial value $(X_0,Y_0)\sim\gamma$, then $X_t\sim \mu p_t$ and $Y_t\sim \nu p_t$. From the inequality \eqref{rho_inequality} we obtain
\begin{equation}
	\E[\rho(X_t,Y_t)] \Le 
	e^{-ct} \E[\rho(X_0,Y_0)] + \frac{m(\delta)(1-e^{-ct})}{c} \Le 
	e^{-ct} \W_f(\mu,\nu) +
	\frac{m(\delta)(1-e^{-ct})}{c} + \varepsilon
\end{equation}
Using the definition of $\W_f$, 
\begin{equation}
	\E[\rho(X_t,Y_t)]  
	\Ge \inf_{\gamma\in\Pi(\mu p_t,\nu p_t)}
	\int \bigg(
	\frac1N\sum_{i=1}^N
	f(|x^i-y^i|)\bigg)\gamma(\d x\d y) =
	\W_f(\mu p_t,\nu p_t),
\end{equation}
hence one obtains
\begin{equation}
	\W_f(\mu p_t,\nu p_t) \Le 
	e^{-ct} \W_f(\mu,\nu) +
	\frac{m(\delta)(1-e^{-ct})}{c} + \varepsilon.
\end{equation}
Note that the evolution of $\mu p_t$ and $\nu p_t$ does not depend on the coupling scheme, we can directly pass $\delta$ and $\varepsilon$ to 0 and obtain
\begin{equation}
	\W_f(\mu p_t,\nu p_t) \Le e^{-ct} \W_f(\mu,\nu),
\end{equation}
which is exactly the contractivity we need.
\end{proof}
\subsection{Exact dynamics: IPS}
We apply Theorem \ref{theorem:product_contractivity} to derive the geometric ergodicity for the IPS \eqref{IPS}. For the IPS, the perturbation $\gamma^i(\cdot):\mathbb R^{Nd}\rightarrow\mathbb R^d$ is given by \eqref{gamma_IPS}. Suppose $L_K$ is the Lipschitz constant of the interaction $K(\cdot):\mathbb R^d\rightarrow\mathbb R^d$, then for any $x,y\in\mathbb R^{Nd}$,
\begin{align*}
	|\gamma^i(x) - \gamma^i(y)| & \Le 
	\frac1{N-1}
	\sum_{j\neq i} |K(x^i - x^j) - K(y^i - y^j)| \\
	& \Le \frac{L_K}{N-1} \sum_{j\neq i}
	\big(|x^i-y^i| + |x^j - y^j|\big).
\end{align*}
Summation over $i\in\{1,\cdots,N\}$ gives
\begin{equation}
	\sum_{i=1}^N |\gamma^i(x) - \gamma^i(y)| \Le 
	2L_K \sum_{i=1}^N |x^i-y^i|.
\end{equation}
Hence Assumption \ref{assumption:gamma} holds with the constant $L=2L_K$. 
In terms of the interaction force $K(\cdot)$, we may replace Assumption \ref{assumption:gamma} by the following one:
\begin{assumption}[interaction]
\label{assumption:interact}
There exists a constant $L_K$ such that
\begin{equation}
	\max\{
	|K(x)|,|\nabla K(x)|,|\nabla^2K(x)|\}
	\Le L_K ,~~~~
	\forall x\in\mathbb R^d.
\end{equation}
\end{assumption}
\begin{remark}
Assumption \ref{assumption:interact} is stronger than Assumption \ref{assumption:gamma} because we require not only $\nabla K(\cdot)$ but also $K(\cdot)$ and $\nabla^2 K(\cdot)$ to be uniformly bounded. The boundedness of $K(\cdot)$ and $\nabla^2 K(\cdot)$ is not necessary to prove the geometric ergodicity, but will be useful in the strong error estimation in Section \ref{section:error}.
\end{remark}
For completeness, we explicitly write the coupling scheme for the IPS \eqref{IPS}. The coupled dynamics $\{(X_t,Y_t)\}_{t\Ge0}$ in $\mathbb R^{Nd}\times\mathbb R^{Nd}$ is given by
\begin{equation}
\left\{
\begin{aligned}
	\d X_t^i & = b(X_t^i)\d t + \frac1{N-1}
	\sum_{j\neq i} K(X_t^i-X_t^j)\d t
	 + \sigma\lambda(Z_t^i)\d W_t^i + \sigma\pi(Z_t^i)\d \tilde W_t^i \\
	\d Y_t^i & = b^i(Y_t)\d t  + \frac1{N-1}
	\sum_{j\neq i}K(Y_t^i-Y_t^j)\d t
	 + \sigma\lambda(Z_t^i)(I-2e_t^i(e_t^i)^\mathrm{T})\d W_t^i 
	+ \sigma\pi(Z_t^i)\d \tilde W_t^i
\end{aligned}
\right.
\label{IPS_couple}
\end{equation}
for $i=1,\cdots,N$.
Theorem \ref{theorem:product_contractivity} then immediately implies
\begin{theorem}[contractivity]
\label{theorem:IPS_contractivity}
Under Assumption \textnormal{\ref{assumption:kappa}} and \textnormal{\ref{assumption:interact}}, let $f(r)$ be the distance function defined in Lemma \textnormal{\ref{lemma:distance}}, and $c:=c_0\sigma^2/2$.
Let $p_t$ be the transition kernel of the IPS \eqref{IPS}.
If the constant $L_K$ in Assumption \textnormal{\ref{assumption:interact}} satisfies
$$
	L_K<\frac{c_0\varphi_0\sigma^2}{16},
$$
then we have
\begin{equation}
	\W_f(\mu p_t,\nu p_t) \Le e^{-ct} \W_f(\mu,\nu),
	~~~~\forall t\Ge0
\end{equation}
for any probability distributions $\mu,\nu\in\P_1$.
\end{theorem}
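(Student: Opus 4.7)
The plan is to reduce the statement directly to Theorem \ref{theorem:product_contractivity} by viewing the IPS \eqref{IPS} as a special instance of the product model \eqref{product_model} with the perturbation $\gamma^i(\cdot)$ specified by \eqref{gamma_IPS}. All that is needed is to check that Assumption \ref{assumption:interact} implies Assumption \ref{assumption:gamma} with a controlled Lipschitz constant $L$, and that the threshold on $L_K$ in the theorem statement translates into the threshold on $L$ required by Theorem \ref{theorem:product_contractivity}.

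First I would invoke the computation already carried out in the paragraph preceding Assumption \ref{assumption:interact}: for $\gamma^i(x) = (N-1)^{-1}\sum_{j\neq i} K(x^i-x^j)$, the triangle inequality together with the Lipschitz bound $|\nabla K|\le L_K$ from Assumption \ref{assumption:interact} gives
\begin{equation*}
\sum_{i=1}^N |\gamma^i(x) - \gamma^i(y)| \Le 2L_K \sum_{i=1}^N |x^i - y^i|,
\end{equation*}
so Assumption \ref{assumption:gamma} holds with $L = 2L_K$. The coupled dynamics needed to apply the product-model machinery is precisely \eqref{IPS_couple}, which is the instance of \eqref{product_couple} corresponding to this choice of $\gamma^i$.

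Next I would observe that the hypothesis $L_K < c_0\varphi_0\sigma^2/16$ is equivalent to $L = 2L_K < c_0\varphi_0\sigma^2/8$, which is exactly the smallness condition required by Theorem \ref{theorem:product_contractivity}. Applying that theorem verbatim to the product model with drift forces $b^i(x) = b(x^i) + \gamma^i(x)$ as in \eqref{gamma_IPS} yields
\begin{equation*}
\W_f(\mu p_t, \nu p_t) \Le e^{-ct} \W_f(\mu,\nu), \qquad \forall t\Ge 0, \quad \forall\mu,\nu\in\P_1,
\end{equation*}
with the same rate $c = c_0\sigma^2/2$, since the distance function $f$ and the constants $c_0,\varphi_0$ depend only on $\kappa(r)$ (Assumption \ref{assumption:kappa}) and not on $N$ or on the specific form of the perturbation.

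There is essentially no obstacle here: the substantive work has been done in Theorem \ref{theorem:product_contractivity}, and the only item to verify is the Lipschitz bound, which is elementary. The one thing worth emphasizing in writing up the proof is that the factor of $2$ in $L = 2L_K$ arises because each relative displacement $x^i - x^j$ contributes to the perturbation at both index $i$ and index $j$, which is the reason the threshold for $L_K$ in the IPS statement is exactly half the threshold for $L$ in the product-model statement.
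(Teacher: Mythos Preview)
Your proposal is correct and matches the paper's own argument essentially verbatim: the paper verifies Assumption~\ref{assumption:gamma} with $L=2L_K$ using the same computation you cite, then states that Theorem~\ref{theorem:product_contractivity} immediately implies the result.
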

Theorem \ref{theorem:IPS_contractivity} is similar to
Corollary 9 in \cite{reflection_rate}.
An important observation from Theorem \ref{theorem:IPS_contractivity} is that both the contraction rate $c$ and the bound of $L_K$ does not depend on the number of particles $N$. A direct corollary of Theorem \ref{theorem:IPS_contractivity} is that for any initial distribution $\nu \in \P_1$, $\nu p_t$ converges to the invariant distribution $\mu$ exponentially.
\begin{corollary}[ergodicity]
\label{theorem:IPS_ergodicity}
Under Assumption \textnormal{\ref{assumption:kappa}} and \textnormal{\ref{assumption:interact}}, let $f(r)$ be the distance function defined in Lemma \textnormal{\ref{lemma:distance}}, and $c:=c_0\sigma^2/2$.
Let $p_t$ be the transition kernel of the IPS \eqref{IPS}, and $\mu\in\P_1$ be the invariant distribution. If the constant $L_K$ in Assumption \textnormal{\ref{assumption:interact}} satisfies
$$
	L_K<\frac{c_0\varphi_0\sigma^2}{16},
$$
then we have
\begin{equation}
	\W_f(\nu p_t,\mu) \Le e^{-ct} \W_f(\nu,\mu),
	~~~~\forall t\Ge0
\end{equation}
for any probability distribution $\nu\in\P_1$.
\end{corollary}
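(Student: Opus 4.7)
The plan is to treat this corollary as an immediate specialization of Theorem \ref{theorem:IPS_contractivity}. The key observation is that the hypothesis ``$\mu$ is the invariant distribution'' means precisely that $\mu p_t = \mu$ for every $t \Ge 0$. So I would apply the contractivity estimate \eqref{W_f_contractivity} with the two initial distributions chosen to be $\nu$ and $\mu$, and then use invariance to replace $\mu p_t$ by $\mu$ on the left-hand side. Explicitly, Theorem \ref{theorem:IPS_contractivity} yields
\begin{equation*}
\W_f(\nu p_t,\mu) = \W_f(\nu p_t,\mu p_t) \Le e^{-ct}\W_f(\nu,\mu),
\end{equation*}
which is the claimed bound. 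Both $\nu\in\P_1$ and $\mu\in\P_1$ are assumed at the outset, and the Lipschitz constant hypothesis $L_K<c_0\varphi_0\sigma^2/16$ is identical to that of Theorem \ref{theorem:IPS_contractivity}, so no new verification is needed.

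There is essentially no obstacle here; the only thing worth flagging is that the statement presumes the existence of an invariant distribution $\mu\in\P_1$, which is not proved in this section. One could briefly remark that existence (together with uniqueness) follows from the Banach fixed point theorem applied to $p_t$ acting on the complete metric space $(\P_1,\W_f)$, using the strict contractivity \eqref{W_f_contractivity}, but since the subsequent Section \ref{section:error} is dedicated to this existence question, I would not elaborate on it inside the proof of the corollary and would instead keep the argument as the two-line deduction above.
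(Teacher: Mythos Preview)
Your proposal is correct and matches the paper's approach exactly: the paper presents this corollary as an immediate consequence of Theorem~\ref{theorem:IPS_contractivity} (substituting $\mu$ for one of the two measures and using $\mu p_t=\mu$), and it likewise defers the existence of $\mu\in\P_1$ to Theorem~\ref{theorem:existence} rather than proving it here.
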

\noindent
The existence of the invariant distribution $\mu$ will be later proved in Theorem \ref{theorem:existence}.

\subsection{Random batch dynamics: RB--IPS}

We prove the geometric ergodicity of the RB--IPS \eqref{RBM_IPS} using reflection coupling. Unfortunately, Theorem \ref{theorem:IPS_contractivity} cannot be directly applied since the perturbation $\gamma^i(x)$ changes its expression in different time steps. In the following, proof of contractivity for the RB--IPS will be mainly based on Lemma \ref{lemma:rho_rate}. Also, it is necessary to clarify the coupled dynamics for the RB--IPS \eqref{RBM_IPS}.

Suppose at the time step $t_n$, the division $\D_n = \{\C_1,\cdots,\C_q\}$ is randomly generated, then the perturation $\gamma^i(x)$ within the time interval $[t_n,t_{n+1})$ is given by \eqref{gamma_RBM}. It is easy to verify
\begin{align*}
	|\gamma^i(x) - \gamma^i(y)| & \Le \frac1{p-1}
	\sum_{j\neq i,j\in\C}
	|K(x^i-x^j) - K(y^i-y^j)| \\
	& \Le \frac{L_K}{p-1}\sum_{j\neq i,j\in\C}
	(|x^i-y^i|+|x^j-y^j|)
\end{align*}
where $\C\in\D_n$ is the batch which contains $i$.
Summation over $i\in\C$ gives
\begin{equation}
	\sum_{i\in\C} |\gamma^i(x) - \gamma^i(y)|
	\Le 2L_K
	\sum_{i\in\C} 
	|x^i-y^i|
\end{equation}
Summation over $\C\in\{\C_1,\cdots,\C_q\}$ gives
\begin{equation}
	\sum_{i=1}^N |\gamma^i(x) - \gamma^i(y)| \Le 
	2L_K
	\sum_{i=1}^N |x^i-y^i|.
\end{equation}
Hence Assumption \ref{assumption:interact} still holds with $L=2L_K$. In a similar way, define the coupled dynamics for the RB--IPS \eqref{RBM_IPS} as follows.

Fix the parameter $\delta>0$ and let the smooth functions $\lambda(z),\pi(z)$ be defined as in \eqref{lambda_pi}.
At each time step $t_n$, suppose the division $\D_n$ is randomly generated, and the coupled dynamics $\{(\tilde X_t,\tilde Y_t)\}_{t\Ge0}$ in $\mathbb R^{Nd}\times\mathbb R^{Nd}$ within the time interval $[t_n,t_{n+1})$ is defined by
\begin{equation}
\left\{
\begin{aligned}
	\d \tilde X_t^i & = b(\tilde X_t^i)\d t + \frac1{p-1}
	\sum_{j\neq i,j\in\C} K(\tilde X_t^i-\tilde X_t^j)\d t
	 + \sigma\lambda(\tilde Z_t^i)\d W_t^i + \sigma\pi(\tilde Z_t^i)\d \tilde W_t^i \\
	\d \tilde Y_t^i & = b^i(\tilde Y_t)\d t  + \frac1{p-1}
	\sum_{j\neq i,j\in\C}K(\tilde Y_t^i-\tilde Y_t^j)\d t
	 + \sigma\lambda(\tilde Z_t^i)(I-2e_t^i(e_t^i)^\mathrm{T})\d W_t^i 
	+ \sigma\pi(\tilde Z_t^i)\d \tilde W_t^i
\end{aligned}
\right.
\label{RBM_couple}
\end{equation}
for $i\in\C$ and $\C\in\D_n$, where $\tilde Z_t^i = \tilde X_t^i - \tilde Y_t^i$ and $e_i = \tilde Z_t^i/|\tilde Z_t^i|$. For convenience, define the filtration of the coupled dynamics \eqref{RBM_couple} by
\begin{equation}
	\mathcal G_n = \sigma(
	(\tilde X_0,\tilde Y_0),
	\{W_s\}_{0\Le s\Le t_n},
	\{\tilde W_s\}_{0\Le s\Le t_n},
	\{\D_k\}_{0\Le k\Le n}).
\end{equation}
That is, $\mathcal G_n$ is determined by the joint distribution of $(\tilde X_0,\tilde Y_0)$ in $\mathbb R^{Nd}\times\mathbb R^{Nd}$, Wiener processes $W_t,\tilde W_t$ before $t_n$, and the batch divisions in the first $n+1$ time steps. Under the condition of $\mathcal G_n$, the division $\D_n$ within the time step $[t_n,t_{n+1})$ is determined, and the coupled dynamics of $(\tilde X_t,\tilde Y_t)$ is exactly given by \eqref{RBM_couple}.

We still choose the distance function $f(r)$ according to Lemma \ref{lemma:distance}, and the distance between $\tilde X_t,\tilde Y_t\in\mathbb R^{Nd}$ is defined by
\begin{equation}
	\rho(\tilde X_t,\tilde Y_t) = \frac1N \sum_{i=1}^N 
	f(\tilde r_t^i),
	\label{tilde_rho}
\end{equation}
where $\tilde r_t^i = |\tilde Z_t^i|$. Similar to Lemma \ref{lemma:rho_rate}, we may derive the contractivity for the coupled dynamics \eqref{RBM_couple}, but only in the time interval $[t_n,t_{n+1})$ and under the condition of fixed $\mathcal G_n$.
\begin{corollary}[contractivity]
\label{theorem:random_rho_rate}
Under Assumptions \textnormal{\ref{assumption:kappa}} and \textnormal{\ref{assumption:interact}}, let $f(r)$ be the distance function in Lemma \textnormal{\ref{lemma:distance}}, and $c:=c_0\sigma^2/2$. If the constant $L_K$ in Assumption \textnormal{\ref{assumption:interact}} satisfies
$$
	L_K < \frac{c_0\varphi_0\sigma^2}{16},
$$
then under the condition of fixed $\mathcal G_n$, for $\tilde\rho_t:=\rho(\tilde X_t,\tilde Y_t)$ defined in \eqref{tilde_rho}, one has
\begin{equation}
	\frac{\d}{\d t}\E[\tilde \rho_t|\mathcal G_n] \Le 
	m(\delta) - c\cdot \E[\tilde \rho_t|\mathcal G_n],~~~~
	t\in[t_n,t_{n+1}).
\end{equation}
\end{corollary}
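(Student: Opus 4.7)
The plan is to observe that once we condition on $\mathcal{G}_n$, the random batch division $\D_n$ is fixed (nonrandom) on $[t_n,t_{n+1})$, so the coupled RB--IPS \eqref{RBM_couple} reduces to a coupled product model \eqref{product_couple} with the specific perturbation $\gamma^i(x) = (p-1)^{-1}\sum_{j\neq i,j\in\C}K(x^i - x^j)$ for $i\in\C\in\D_n$. Consequently, the proof amounts to rerunning the argument of Lemma \ref{lemma:rho_rate} with conditional expectations $\E[\cdot\mid\mathcal G_n]$ replacing plain expectations throughout.

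The first step is to verify that Lemma \ref{lemma:sum_f_inequality} applies to this $\gamma^i$. This is exactly the Lipschitz computation displayed in the paragraph preceding the corollary statement: applying the Lipschitz constant $L_K$ of $K$ batchwise and then summing over batches gives $\sum_{i=1}^N|\gamma^i(x)-\gamma^i(y)|\Le 2L_K\sum_{i=1}^N|x^i-y^i|$, i.e. Assumption \ref{assumption:gamma} holds with $L=2L_K$. The standing hypothesis $L_K<c_0\varphi_0\sigma^2/16$ then gives $L<c_0\varphi_0\sigma^2/8$, which is precisely the smallness condition required in Lemma \ref{lemma:sum_f_inequality}.

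Next, for $t\in[t_n,t_{n+1})$ and $i\in\C\in\D_n$, the displacement $\tilde Z_t^i = \tilde X_t^i - \tilde Y_t^i$ obeys an SDE of the form \eqref{SDE_of_Z}, so $\tilde r_t^i = |\tilde Z_t^i|$ satisfies \eqref{SDE_of_r} with $b^i$ read off from \eqref{gamma_RBM}. Applying It\^o's formula to $f(\tilde r_t^i)$ and taking conditional expectation given $\mathcal G_n$, the stochastic integrals against $dB_t^i$ vanish because on $[t_n,t_{n+1})$ the driving increments are independent of $\mathcal G_n$. This yields the conditional analogue of \eqref{rate_of_change_f},
\begin{equation*}
\frac{\d}{\d t}\E[f(\tilde r_t^i)\mid\mathcal G_n] = \E\Big((\tilde r_t^i)^{-1}\tilde Z_t^i\cdot(b^i(\tilde X_t)-b^i(\tilde Y_t))f'(\tilde r_t^i) + 2\sigma^2\lambda^2(\tilde Z_t^i)f''(\tilde r_t^i)\,\Big|\,\mathcal G_n\Big).
\end{equation*}
Summing over $i=1,\dots,N$, dividing by $N$, and invoking Lemma \ref{lemma:sum_f_inequality} pointwise inside the conditional expectation (which is legal because the inequality is deterministic, valid for every realization of $(\tilde X_t,\tilde Y_t)$) gives
\begin{equation*}
\frac{\d}{\d t}\E[\tilde\rho_t\mid\mathcal G_n] \Le m(\delta) - c\,\E[\tilde\rho_t\mid\mathcal G_n],
\end{equation*}
which is the claim.

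I do not expect a substantial obstacle here: structurally the statement is Lemma \ref{lemma:rho_rate} transplanted to the interval $[t_n,t_{n+1})$ under conditioning. The only slightly delicate point is justifying the interchange of $d/dt$ with the conditional expectation and the vanishing of the martingale part under $\E[\cdot\mid\mathcal G_n]$; both follow because $\mathcal G_n$ is determined by the data up to time $t_n$ and the choice of $\D_n$, while the Wiener increments on $(t_n,t_{n+1})$ are independent of $\mathcal G_n$, and the assumed boundedness of $K$ together with the linear growth of $b$ (implicit in Assumption \ref{assumption:kappa}) supplies enough integrability to legitimize the manipulations.
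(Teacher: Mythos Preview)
Your proposal is correct and follows exactly the paper's approach: the paper simply states that the corollary ``can be directly derived from Lemma \ref{lemma:rho_rate} since Assumption \ref{assumption:gamma} holds with $L=2L_K$,'' which is precisely your argument of conditioning on $\mathcal G_n$ to freeze $\D_n$, reducing to the product model, and then invoking Lemma \ref{lemma:rho_rate}. Your write-up is in fact more detailed than the paper's one-line justification, including the explicit check that $L_K<c_0\varphi_0\sigma^2/16$ yields the smallness condition $L<c_0\varphi_0\sigma^2/8$ needed in Lemma \ref{lemma:sum_f_inequality}.
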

Corollary \ref{theorem:random_rho_rate} can be directly derived from Lemma \ref{lemma:rho_rate} since Assumption \ref{assumption:gamma} holds with $L=2L_K$. Taking the expectation over the filtration $\mathcal G_n$, one obtains
\begin{equation}
	\frac{\d}{\d t}\E[\tilde\rho_t] \Le m(\delta) - 
	c\cdot \E[\tilde \rho_t],~~~~t\in[t_n,t_{n+1}).
\end{equation}
Integrating this equation in the time interval $[t_n,t_{n+1})$ gives
\begin{equation}
	\E[\tilde \rho_{(n+1)\tau}] \Le 
	e^{-c\tau}\E[\tilde \rho_{n\tau}] + 
	\frac{m(\delta)(1-e^{-c\tau})}c,~~~~
	\forall n\Ge0.
	\label{tilde_rho_inequality_1}
\end{equation}
Induction on \eqref{tilde_rho_inequality_1} for the first $n$ time steps gives
\begin{equation}
	\E[\tilde \rho_{n\tau}] \Le 
	e^{-cn\tau}\E[\tilde \rho_0] + 
	\frac{m(\delta)(1-e^{-cn\tau})}c,~~~~
	\forall n\Ge0.
	\label{tilde_rho_inequality}
\end{equation}
Let $\tilde p_t$ be the transition kernel of the RB--IPS \eqref{RBM_IPS}. 
Given the probability distributions $\mu,\nu\in \P_1$,
suppose the initial values $\tilde X_0\sim\mu ,\tilde Y_0\sim \nu$, then $\tilde X_{n\tau}\sim \mu \tilde p_{n\tau},\tilde Y_{n\tau}\sim \nu \tilde p_{n\tau}$. Clearly, \eqref{tilde_rho_inequality} implies
\begin{equation}
	\W_f(\mu \tilde p_{n\tau},\nu \tilde p_{n\tau}) 
	\Le e^{-cn\tau}
	\W_f(\mu,\nu) + 
	\frac{m(\delta)(1-e^{-nc\tau})}{c},~~~~
	\forall n\Ge0.
	\label{tilde_p_inequality}
\end{equation}
A crucial observation of \eqref{tilde_p_inequality} is that the evolution of the distributions $\{\mu \tilde p_{n\tau}\}_{n\Ge0}$ and $\{\nu \tilde p_{n\tau}\}_{n\Ge0}$ does not depend on the coupling scheme, in particular, the free parameter $\delta>0$. Therefore, one may pass the limit $\delta\rightarrow0$ in \eqref{tilde_p_inequality} to obtain
\begin{equation}
	\W_f(\mu \tilde p_{n\tau},\nu \tilde p_{n\tau}) 
	\Le e^{-cn\tau}
	\W_f(\mu,\nu),
	~~~~\forall n\Ge0.
\end{equation}
Concluding the deduction above, we obtain
\begin{theorem}[contractivity]
\label{theorem:RBM_contractivity}
Under Assumptions \textnormal{\ref{assumption:kappa}} and \textnormal{\ref{assumption:interact}}, let $f(r)$ be the distance function defined in Lemma \textnormal{\ref{lemma:distance}}, and $c:=c_0\sigma^2/2$.
Let $\tilde p_t$ be the transition kernel of the RB--IPS \eqref{RBM_IPS}.
If the constant $L_K$ in Assumption $\ref{assumption:interact}$ satisfies
\begin{equation}
	L_K<\frac{c_0\varphi_0\sigma^2}{16},
\end{equation}
then
\begin{equation}
	\W_f(\mu \tilde p_{n\tau},\nu \tilde p_{n\tau}) \Le e^{-cn\tau} \W_f(\mu,\nu),~~~~
	\forall n\Ge0
\end{equation}
for any probability distributions $\mu,\nu\in\P_1$.
\end{theorem}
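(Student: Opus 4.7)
The plan is to iterate a one-step contraction on each sub-interval $[t_n,t_{n+1})$ and then take expectation over the random batch divisions, which is essentially the program already outlined immediately before the statement. The key point that makes this work uniformly in $n$ is that, within each interval and conditioned on the filtration $\mathcal G_n$, the RB--IPS is a product model \eqref{product_model} whose perturbation $\gamma^i(x)$ satisfies Assumption~\ref{assumption:gamma} with constant $L=2L_K$, \emph{regardless of which batch division} $\mathcal D_n$ was drawn. Thus the hypothesis $L_K<c_0\varphi_0\sigma^2/16$ translates to $L<c_0\varphi_0\sigma^2/8$, which is exactly what Lemma~\ref{lemma:rho_rate} demands, and the contraction rate $c=c_0\sigma^2/2$ is independent of the random division.

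First, I would fix $\delta>0$ and a joint initial distribution of $(\tilde X_0,\tilde Y_0)$ with marginals $\mu,\nu$, and run the coupled dynamics \eqref{RBM_couple}. Conditioning on $\mathcal G_n$, Corollary~\ref{theorem:random_rho_rate} gives the differential inequality for $\E[\tilde\rho_t\mid\mathcal G_n]$ on $[t_n,t_{n+1})$. Taking an unconditional expectation removes the conditioning (since the inequality is linear in the conditional expectation), and integrating over the time step yields the one-step recursion
\begin{equation*}
\E[\tilde\rho_{(n+1)\tau}]\Le e^{-c\tau}\E[\tilde\rho_{n\tau}]+\frac{m(\delta)(1-e^{-c\tau})}{c}.
\end{equation*}
Induction on $n$ then produces the bound \eqref{tilde_rho_inequality}.

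Second, choose an $\varepsilon$-optimal coupling of $\mu,\nu$ at time $0$ for $\W_f$, so that $\E[\tilde\rho_0]\le \W_f(\mu,\nu)+\varepsilon$; since $\tilde X_{n\tau}\sim\mu\tilde p_{n\tau}$ and $\tilde Y_{n\tau}\sim\nu\tilde p_{n\tau}$, the pair at time $n\tau$ realizes an admissible coupling of $(\mu\tilde p_{n\tau},\nu\tilde p_{n\tau})$, whence
\begin{equation*}
\W_f(\mu\tilde p_{n\tau},\nu\tilde p_{n\tau})\Le \E[\tilde\rho_{n\tau}]\Le e^{-cn\tau}(\W_f(\mu,\nu)+\varepsilon)+\frac{m(\delta)(1-e^{-cn\tau})}{c}.
\end{equation*}
Finally, I would exploit the fact that the laws $\mu\tilde p_{n\tau},\nu\tilde p_{n\tau}$ are intrinsic objects, independent of the coupling parameter $\delta$. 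Sending $\varepsilon\to 0$ and $\delta\to 0$, and using $m(\delta)\to 0$ (which follows from the boundedness of $\kappa(r)^-$ in Assumption~\ref{assumption:kappa} and the explicit form \eqref{m_delta}), yields the desired contraction.

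The main subtlety is not the contraction itself but the bookkeeping around the random batch: one must check that the differential inequality in Corollary~\ref{theorem:random_rho_rate} holds for \emph{every} realization of $\mathcal D_n$ with the same rate $c$ and same constant $m(\delta)$, so that after taking expectation over $\mathcal G_n$ the recursion closes. This is clean because the Lipschitz bound $L=2L_K$ on $\gamma^i$ holds deterministically under any partition into batches, and the distance function $f$ and rate $c$ are chosen once and for all from $\kappa$ and $\sigma$, with no dependence on $N$, $\tau$, $p$, or the realization of the divisions.
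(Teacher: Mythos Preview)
Your proposal is correct and follows essentially the same route as the paper: condition on $\mathcal G_n$ to reduce each sub-interval to the product-model setting, invoke Corollary~\ref{theorem:random_rho_rate} uniformly over batch divisions (using $L=2L_K$), take expectation and integrate to obtain the one-step recursion~\eqref{tilde_rho_inequality_1}, induct to get~\eqref{tilde_rho_inequality}, pass to the Wasserstein distance via an $\varepsilon$-optimal initial coupling, and finally let $\delta\to 0$. Your emphasis on the division-independence of the constants $c$ and $m(\delta)$ is exactly the point that makes the expectation over $\mathcal G_n$ close the recursion cleanly.
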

Theorem \ref{theorem:RBM_contractivity} is a random batch version of Theorem \ref{theorem:IPS_contractivity}. The contraction rate $c$ is a constant of order 1 and does not depend on the number of particles $N$, the batch size $p$ or the time step $\tau$.
\begin{remark}
The continuous time dynamics RB--IPS $\{\tilde X_t\}_{t\Ge0}$ is not a time-homogeneous Markov process, since the random divisions are determined at different time steps. However, $\{\tilde X_{n\tau}\}_{n\Ge0}$ is a time-homogeneous Markov chain, and the transition kernels $\{\tilde p_{n\tau}\}_{n\Ge0}$ forms a semi-group.
\end{remark}
Similar to Corollary \ref{theorem:IPS_ergodicity}, we can prove that for any initial distribution $\nu\in\P_1$, $\nu \tilde p_{n\tau}$ converges to the invariant distribution $\tilde\mu$ exponentially.
\begin{corollary}[contractivity]
\label{theorem:RBM_ergodicity}
Under Assumptions \textnormal{\ref{assumption:kappa}} and \textnormal{\ref{assumption:interact}}, let $f(r)$ be the distance function defined in Lemma \textnormal{\ref{lemma:distance}}, and $c:=c_0\sigma^2/2$.
Let $\tilde p_t$ be the transition kernel of the RB--IPS \eqref{RBM_IPS}, and $\tilde\mu\in\P_1$ be the invariant distribution.
If the constant $L_K$ in Assumption \textnormal{\ref{assumption:interact}} satisfies
$$
	L_K<\frac{c_0\varphi_0\sigma^2}{16},
$$
then we have
\begin{equation}
	\W_f(\nu \tilde p_{n\tau},\tilde\mu) \Le e^{-cn\tau} \W_f(\nu,\tilde\mu),~~~~
	\forall n\Ge0
\end{equation}
for any probability distribution $\nu\in\P_1$.
\end{corollary}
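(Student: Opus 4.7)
The plan is to derive this corollary essentially for free from Theorem \ref{theorem:RBM_contractivity}, which we have just established. The invariant distribution $\tilde\mu \in \P_1$ is assumed to exist (its existence will be supplied by the later Theorem \ref{theorem:existence}), and by the defining property of invariance one has $\tilde\mu \tilde p_{n\tau} = \tilde\mu$ for every $n \Ge 0$.

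First, I would invoke Theorem \ref{theorem:RBM_contractivity} applied to the two initial distributions $\mu := \tilde\mu$ and $\nu \in \P_1$. Since $L_K < c_0\varphi_0\sigma^2/16$ by hypothesis, the theorem yields
\begin{equation*}
    \W_f(\tilde\mu \tilde p_{n\tau},\, \nu \tilde p_{n\tau}) \Le e^{-cn\tau}\, \W_f(\tilde\mu,\nu), \qquad \forall n \Ge 0.
\end{equation*}
Next, I would substitute $\tilde\mu \tilde p_{n\tau} = \tilde\mu$ on the left-hand side, so the inequality becomes $\W_f(\tilde\mu,\, \nu \tilde p_{n\tau}) \Le e^{-cn\tau}\, \W_f(\tilde\mu,\nu)$, which is exactly the claim (up to the symmetry of the semimetric $\W_f$ in its two arguments, which follows from the symmetry of $f(|x^i - y^i|)$ in the coupling integrand).

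There is no real obstacle here: the only thing to check is that it is legitimate to plug $\tilde\mu$ into the contractivity inequality, which requires $\tilde\mu \in \P_1$. This is precisely the hypothesis of the corollary, and will be independently verified when the existence of $\tilde\mu$ is established via the Banach fixed point argument sketched in the introduction (using Theorem \ref{theorem:RBM_contractivity} together with completeness of $(\P_1, \W_1)$ and the equivalence of $\W_1$ and $\W_f$). Thus the proof reduces to a one-line application of the preceding theorem and the definition of an invariant measure.
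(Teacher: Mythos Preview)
Your proposal is correct and is exactly the approach the paper takes: the corollary is stated without a separate proof, being presented as an immediate consequence of Theorem~\ref{theorem:RBM_contractivity} by substituting the invariant distribution $\tilde\mu$ (whose existence in $\P_1$ is deferred to Theorem~\ref{theorem:existence}) for one of the two arguments and using $\tilde\mu\tilde p_{n\tau}=\tilde\mu$.
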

\noindent
The existence of the invariant distribution $\tilde\mu$ will be later proved in Theorem \ref{theorem:existence}.
\section{Error Estimation of Invariant Distributions}
\label{section:error}

In this section we measure the difference between the invariant distributions $\mu,\tilde\mu$ of the IPS \eqref{IPS} and the RB--IPS \eqref{RBM_IPS}. We shall prove the following results:
\begin{enumerate}
\item \textbf{Existence of invariant distributions.} The IPS has an invariant distribution $\mu\in\P_1$, and the RB--IPS has an invariant distribution $\tilde\mu\in\P_1$. This is a direct corollary of the geometric ergodicity proved in Section \ref{section:ergodicity} using the Banach fixed point theorem.
\item \textbf{Strong error estimation in finite time.}
Using the strong error estimation\cite{RBM_error}, for given initial distribution $\nu$, the distance between $\nu p_t$ and $\nu\tilde p_t$ can be bounded by $O(\tau^{\frac12})$, where $p_t,\tilde p_t$ are the transition kernels of the IPS and the RB--IPS respectively.
\item \textbf{Error estimation of invariant distributions.}
Combining the geometric ergodicity and the strong error estimation in finite time, we are able to estimate the difference between the invariant distributions $\mu,\tilde \mu$, using the triangle inequality described in the Introduction.
\end{enumerate}

\subsection{Characterization of invariant distributions}
\label{section:invariant}

We prove the existence of the invariant distributions for the IPS \eqref{IPS} and the RB--IPS \eqref{RBM_IPS} and estimate their first-order moments.
The proof is accomplished by the Banach fixed point theorem on the space $\P_1$ of probability distributions, where we have defined in \eqref{distribution_P1}. Such strategy has previously appeared in \cite{reflection_rate}, which proves the existence of the invariant distribution $\mu$ of the IPS. We extend this strategy to prove the existence of invariant distribution $\tilde\mu$ of the RB--IPS \eqref{RBM_IPS}.

To begin with, we show that the distributions $\nu p_t$ and $\nu \tilde p_t$ always have finite first-order moments.
\begin{lemma}[moment]
\label{lemma:moment}
Under Assumptions $\ref{assumption:kappa}$ and $\ref{assumption:interact}$, there exists a constant $D$ such that if the constant $L_K$ in Assumption $\ref{assumption:interact}$ satisfies
$$
	L_K < \frac{c_0\varphi_0\sigma^2}{16},
$$
then for any probability distribution $\nu\in \P_1$,
\begin{enumerate}
\item[\textnormal{(i)}] $\nu p_t\in\P_1$ for any $t\Ge0$, and
\begin{equation}
\varlimsup_{t\rightarrow\infty}
\int_{\mathbb R^{Nd}} \bigg(\frac1N\sum_{i=1}^N|x^i|\bigg)(\nu p_t)(\d x) \Le D;
\end{equation}
\item[\textnormal{(ii)}] $\nu\tilde p_{n\tau}\in\P_1$ for any $n\Ge0$, and
\begin{equation}
\varlimsup_{n\rightarrow\infty}
\int_{\mathbb R^{Nd}} \bigg(\frac1N\sum_{i=1}^N|x^i|\bigg)(\nu \tilde p_{n\tau})(\d x) \Le D.
\end{equation}
\end{enumerate}
The constant $D$ does not depend on the number of particles $N$, the time step $\tau$, the batch size $p$ and the initial distribution $\nu$.
\end{lemma}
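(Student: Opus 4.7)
The plan is to derive a uniform-in-time second-moment bound via It\^o's formula, then deduce the first-moment bound by Cauchy--Schwarz and Jensen. The key observation is that Assumption~\ref{assumption:kappa} supplies a one-sided dissipativity condition on $b(\cdot)$, while Assumption~\ref{assumption:interact} provides a uniform $L^\infty$ bound on $K(\cdot)$ and hence on the perturbation $\gamma^i(\cdot)$ in both \eqref{gamma_IPS} and \eqref{gamma_RBM}. The Lipschitz condition on $L_K$ stated in the lemma is not actually exploited in the moment estimate itself; only $|K|\Le L_K$ is used.

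\textbf{Step 1 (dissipativity of the drift).} Setting $y=0$ in the definition \eqref{kappa} of $\kappa$ and using Cauchy--Schwarz, I obtain
$$ x\cdot b(x) \Le -\tfrac{\sigma^2}{2}|x|^2\kappa(|x|) + |x|\,|b(0)|. $$
Since $\kappa$ is bounded below and $\varliminf_{r\to\infty}\kappa(r)>0$, there exist constants $\alpha,\beta>0$ depending only on $\sigma,\kappa(\cdot),|b(0)|$ (and in particular independent of $N,\tau,p$) such that $x\cdot b(x)\Le -\alpha|x|^2+\beta$ for all $x\in\mathbb R^d$.

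\textbf{Step 2 (second moment for the IPS).} Applying It\^o's formula to $|X_t^i|^2$ along \eqref{IPS} and using $|\gamma^i(X_t)|\Le L_K$, I get
$$ \frac{\d}{\d t}\E|X_t^i|^2 \Le -2\alpha\,\E|X_t^i|^2 + 2L_K\,\E|X_t^i| + 2\beta + \sigma^2 d. $$
Summing over $i$ and applying Cauchy--Schwarz $\sum_i|X_t^i|\Le\sqrt{N}\bigl(\sum_i|X_t^i|^2\bigr)^{1/2}$ together with Young's inequality to absorb the linear term into the quadratic one, I arrive at
$$ \frac{\d}{\d t}\E\sum_{i=1}^N|X_t^i|^2 \Le -\alpha\,\E\sum_{i=1}^N|X_t^i|^2 + C_0 N, $$
for a constant $C_0$ depending only on $\sigma,L_K,d,\alpha,\beta$. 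Gronwall's lemma then yields
$$ \E\Big[\tfrac1N\sum_i|X_t^i|^2\Big] \Le e^{-\alpha t}\,\E\Big[\tfrac1N\sum_i|X_0^i|^2\Big] + C_0/\alpha, $$
and Cauchy--Schwarz combined with Jensen's inequality gives $\E[\frac1N\sum_i|X_t^i|]\Le\sqrt{\E[\frac1N\sum_i|X_t^i|^2]}$. Passing to $\varlimsup_{t\to\infty}$ proves claim (i) with $D:=\sqrt{C_0/\alpha}$, which is independent of $N$ and $\nu$.

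\textbf{Step 3 (extension to the RB--IPS).} The batched perturbation \eqref{gamma_RBM} likewise satisfies $|\gamma^i|\Le L_K$, and the common drift $b$ is unchanged. Thus on each interval $[t_n,t_{n+1})$, conditioning on $\mathcal G_n$, the same It\^o computation produces the identical conditional differential inequality. Integrating over $[t_n,t_{n+1})$ and taking outer expectation gives the geometric recursion
$$ \E\sum_i|\tilde X_{(n+1)\tau}^i|^2 \Le e^{-\alpha\tau}\,\E\sum_i|\tilde X_{n\tau}^i|^2 + \frac{C_0 N(1-e^{-\alpha\tau})}{\alpha}, $$
whose iteration bounds $\E[\frac1N\sum_i|\tilde X_{n\tau}^i|^2]$ by $e^{-\alpha n\tau}\E[\frac1N\sum_i|\tilde X_0^i|^2]+C_0/\alpha$. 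The same Cauchy--Schwarz / Jensen step then yields claim (ii) with the same constant $D$, independent of $\tau$ and $p$.

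\textbf{Main obstacle.} The critical point is ensuring the forcing term in the differential inequality scales \emph{linearly} in $N$ (so that after normalizing by $1/N$ it is bounded uniformly). This requires judicious use of Cauchy--Schwarz and Young's inequality when controlling the cross term $2L_K\sum_i|X_t^i|$: one must split with a small enough weight to keep a strictly positive coefficient in front of the dissipative $|X_t^i|^2$ contribution, without letting any constant pick up $N$-dependence. For the RB--IPS, a further subtle point is that the continuous-time process is not time-homogeneous, so the inequality has to be established conditionally on $\mathcal G_n$ on each batch interval before outer expectation and iteration are applied.
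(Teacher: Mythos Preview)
Your strategy is essentially the same as the paper's---use the one-sided dissipativity of $b$ from Assumption~\ref{assumption:kappa}, the uniform bound $|\gamma^i|\Le L_K$ from Assumption~\ref{assumption:interact}, and a Gronwall argument, first for the IPS and then conditionally on each batch interval for the RB--IPS. You are also right that the smallness condition on $L_K$ is not actually used here.

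There is, however, one genuine gap. Your argument controls the \emph{second} moment and then descends to the first moment via Cauchy--Schwarz/Jensen. But the hypothesis is only $\nu\in\P_1$, i.e.\ finite first moment; nothing guarantees $\E\bigl[\frac1N\sum_i|X_0^i|^2\bigr]<\infty$. If the initial second moment is infinite, your Gronwall bound
\[
\E\Big[\tfrac1N\textstyle\sum_i|X_t^i|^2\Big]\ \Le\ e^{-\alpha t}\,\E\Big[\tfrac1N\textstyle\sum_i|X_0^i|^2\Big]+C_0/\alpha
\]
is vacuous, and you neither obtain the claim $\nu p_t\in\P_1$ for all $t\Ge0$ nor the $\varlimsup$ bound. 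The paper sidesteps this by applying It\^o's formula to the smooth, \emph{linearly growing} Lyapunov function $f(x)=\sqrt{|x|^2+1}$ instead of $|x|^2$: then $m(0)=\E[\frac1N\sum_i f(X_0^i)]\Le \mathcal M_1(\nu)+1<\infty$ for every $\nu\in\P_1$, and the differential inequality $m'(t)\Le C-\beta m(t)$ directly yields both finiteness for all $t$ and the uniform $\varlimsup$ bound. Replacing $|x|^2$ by $\sqrt{|x|^2+1}$ in your computation (the gradient and Laplacian are still bounded, so the $\gamma^i$ and noise contributions are handled identically) closes the gap with no other change to your outline.

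A minor point: for the RB--IPS you condition on $\mathcal G_n$, which is the filtration of the \emph{coupled} dynamics; since only a single copy $\tilde X_t$ is in play here, the appropriate filtration is $\mathcal F_n=\sigma(\nu,\{W_s\}_{s\Le t_n},\{\D_k\}_{k\Le n})$ as the paper uses.
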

The proof of Lemma \ref{lemma:moment} is left in Appendix. The asymptotic positivity of the function $\kappa(r)$ in Assumption \ref{assumption:kappa} is crucial to bound the moments of $\nu p_t$ and $\nu \tilde p_t$ uniformly in time.
\begin{remark}
As we shall see in strong error estimation, we can also obtain the $\alpha$-th order moment estimation which is uniform in time for a general constant $\alpha\Ge2$.
\end{remark}
Using the contractivity obtained in Section \ref{section:ergodicity}, we derive the existence of the invariant distributions:
\begin{theorem}[existence] 
\label{theorem:existence}
Under Assumptions \textnormal{\ref{assumption:kappa}} and \textnormal{\ref{assumption:interact}}, there exists a constant $D$ such that if the constant $L_K$ in Assumption \textnormal{\ref{assumption:interact}} satisfies
$$
	L_K < \frac{c_0\varphi_0\sigma^2}{16},
$$
then
\begin{enumerate}
\item[\textnormal{(i)}] The Markov process $\{X_t\}_{t\Ge0}$ evolved by the IPS \eqref{IPS} has a unique invariant distribution $\mu\in\P_1$;
\item[\textnormal{(ii)}] The Markov chain $\{\tilde X_{n\tau}\}_{n\Ge0}$ evolved by the RB--IPS \eqref{RBM_IPS} has a unique invariant distribution $\tilde\mu\in\P_1$.
\end{enumerate}
\end{theorem}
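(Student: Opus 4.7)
The plan is to invoke the Banach fixed point theorem on the complete metric space $(\P_1,\W_1)$, using the contractivity established in Theorems \ref{theorem:IPS_contractivity} and \ref{theorem:RBM_contractivity} together with the uniform moment bound from Lemma \ref{lemma:moment}. The distance $\W_f$ is equivalent to $\W_1$ via $(\varphi_0/4)\W_1\Le \W_f\Le \W_1$ (and $\W_f$ is in fact a genuine metric, since a concave function with $f(0)=0$ is subadditive), so completeness transfers and Cauchy sequences coincide in the two topologies.

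For part (i), I fix $t_0>0$ and consider the map $T:\P_1\to\P_1$ defined by $T(\nu)=\nu p_{t_0}$. Lemma \ref{lemma:moment}(i) ensures $T$ maps $\P_1$ into itself, and Theorem \ref{theorem:IPS_contractivity} yields $\W_f(T\mu,T\nu)\Le e^{-ct_0}\W_f(\mu,\nu)$. Banach's theorem applied to $(\P_1,\W_f)$ — or equivalently to $(\P_1,\W_1)$ after choosing $t_0$ large enough that $(4/\varphi_0)e^{-ct_0}<1$ — then produces a unique fixed point $\mu\in\P_1$ satisfying $\mu p_{t_0}=\mu$. To upgrade this to invariance under the full semigroup $\{p_t\}_{t\Ge0}$, I observe that for any $t\Ge0$ the distribution $\mu p_t$ is again in $\P_1$ (by Lemma \ref{lemma:moment}(i)) and satisfies $(\mu p_t)p_{t_0}=\mu p_{t_0}p_t=\mu p_t$, so it is another fixed point of $T$; uniqueness forces $\mu p_t=\mu$. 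Finally, uniqueness of $\mu$ among all $\P_1$-invariant distributions is immediate from Theorem \ref{theorem:IPS_contractivity}: any other invariant $\mu'$ would obey $\W_f(\mu,\mu')\Le e^{-ct}\W_f(\mu,\mu')$ for every $t\Ge0$, forcing $\W_f(\mu,\mu')=0$.

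Part (ii) runs completely in parallel, with the map $\tilde T:\P_1\to\P_1$ given by $\tilde T(\nu)=\nu\tilde p_\tau$. Lemma \ref{lemma:moment}(ii) gives $\tilde T(\P_1)\subset\P_1$, Theorem \ref{theorem:RBM_contractivity} specialized to $n=1$ supplies the $\W_f$-contraction, and Banach's theorem produces a unique fixed point $\tilde\mu\in\P_1$ with $\tilde\mu\tilde p_\tau=\tilde\mu$; by construction, this is the invariant distribution of the time-homogeneous Markov chain $\{\tilde X_{n\tau}\}_{n\Ge0}$ (no semigroup-extension step is needed here, since the RB--IPS is not time-homogeneous as a continuous-time process). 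The only real technicality — and the point I would spend most of the care on — is the passage from $\W_f$-contractivity to a bona fide contraction on a complete metric space: because the equivalence constant $4/\varphi_0$ may exceed $1$, a single application of $T$ in the $\W_1$ metric need not strictly contract. This is resolved either by working directly in $\W_f$ (which, being a proper metric equivalent to $\W_1$, makes $(\P_1,\W_f)$ complete), or by taking $t_0$ sufficiently large, respectively iterating $\tilde T$ finitely many times, to absorb the equivalence constant. Everything else is bookkeeping.
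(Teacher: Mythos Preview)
Your proof is correct and follows the same overall strategy as the paper --- Banach fixed point on $(\P_1,\W_1)$ (equivalently $\W_f$), with Lemma~\ref{lemma:moment} supplying the invariance of $\P_1$ and the contractivity theorems supplying the contraction --- but the way you upgrade the fixed point of a single time-$t_0$ map to full invariance differs from the paper and is worth noting.

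For part~(i), the paper first obtains a fixed point $\mu_0$ of $p_T$ and then \emph{time-averages}, defining $\mu=\tfrac1T\int_0^T\mu_0 p_s\,\d s$ and checking that periodicity forces $\mu p_t=\mu$ for all $t$. You instead argue directly that for each $t$, $\mu p_t$ is again a fixed point of $p_{t_0}$ (by semigroup commutativity $p_t p_{t_0}=p_{t_0}p_t$), whence uniqueness gives $\mu p_t=\mu$. Your route is shorter and avoids the integral construction; the paper's averaging trick is perhaps more robust in settings where the semigroup structure is weaker, but here both are fine. For part~(ii) the paper similarly averages over the first $N$ steps of the chain after obtaining a fixed point of $\tilde p_{N\tau}$, whereas you work with $\tilde p_\tau$ itself (contractive in $\W_f$ already at $n=1$) and therefore need no averaging at all; this is again a genuine simplification. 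One side remark: the paper asserts that $\W_f$ is only a semimetric because $f$ ``does not satisfy the triangle inequality due to concavity,'' but your observation that a concave $f$ with $f(0)=0$ is subadditive --- hence $f(|\cdot|)$ \emph{does} satisfy the triangle inequality --- is correct, and justifies working directly in the complete metric space $(\P_1,\W_f)$ without passing to $\W_1$.
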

The proof below is similar to the proof of Corollary 3 in \cite{reflection_rate}.
\begin{proof}
(i) Note that the Wasserstein distance $\W_f$ is equivalent to the standard $\W_1$-distance
\begin{equation}
	\W_1(\mu,\nu) = \inf_{\gamma\in\Pi(\mu,\nu)}
	\int_{\mathbb R^{Nd}\times\mathbb R^{Nd}}
	\bigg(\frac1N\sum_{i=1}^N|x^i-y^i|\bigg)
	\gamma(\d x\d y).
	\label{W_1_distance}
\end{equation}
From Theorem \ref{theorem:IPS_contractivity}, there exists a constant $C>0$ such that
\begin{equation}
	\W_1(\mu p_t,\nu p_t) \Le Ce^{-ct} \cdot\W_1(\mu,\nu),~~~~
	\forall t\Ge0
\end{equation}
for all distributions $\mu,\nu\in \P_1$. Then there exists a constant $T>0$ such that $q:=Ce^{-cT}<1$ and
\begin{equation}
	\W_1(\mu p_T,\nu p_T) \Le q \cdot \W_1(\mu,\nu).
\end{equation}
Hence the mapping $\nu\mapsto \nu p_T$ is contractive in the complete metric space $\P_1$. From the Banach fixed point theorem, this mapping has a fixed point $\mu_0 \in \P_1$, i.e.,
\begin{equation}
	\mu_0 = \mu_0 p_T.
\end{equation}
Define the distribution
\begin{equation}
	\mu = \frac1T\int_0^T \mu_0 p_s\d s,
\end{equation}
then $\mu$ is a probability distribution in $\mathbb R^{Nd}$ and $\mu\in \P_1$ from Lemma \ref{lemma:moment}.
From the Markov property of the IPS $\{X_t\}_{t\Ge0}$, for any $t\Ge0$ we have
\begin{equation}
	\mu p_t = \frac1T\int_0^T (\mu_0 p_s) p_t\d s = \frac1T\int_0^T \mu_0 p_{s+t}\d s.  
\end{equation}
Since the family of distributions $\{\mu p_t\}_{t\Ge0}$ has the period $T$, we have
\begin{equation}
	 \mu p_t = \frac1T\int_0^T \mu_0 p_s \d s = \mu.
\end{equation}
Therefore, $\mu$ is the invariant distribution of the Markov process $\{X_t\}_{t\Ge0}$. The uniqueness of $\mu$ follows from the contractivity in Theorem \ref{theorem:IPS_contractivity}.\\[6pt]
(ii) For given $\tau>0$, there exists a constant $C>0$ such that
\begin{equation}
	\W_1(\mu\tilde p_{n\tau},\nu \tilde p_{n\tau}) \Le 
	C e^{-nc\tau} \cdot
	\W_1(\mu,\nu),
\end{equation}
then one can choose an integer $N\in\mathbb N$ such that $q= Ce^{-Nc\tau}<1$, and
\begin{equation}
	\W_1(\mu\tilde p_{N\tau},\nu \tilde p_{N\tau}) \Le 
	q\cdot
	\W_1(\mu,\nu),
\end{equation}
so that the mapping $\nu\mapsto \nu \tilde p_{N\tau}$ is contractive. From the Banach fixed point theorem, this  mapping has a fixed point $\tilde\mu_0\in\P_1$, i.e.,
\begin{equation}
	\tilde \mu_0 = \tilde \mu_0 \tilde p_{N\tau}
\end{equation}
Define the distribution
\begin{equation}
	\tilde \mu = \frac1N\sum_{k=0}^{N-1}
	\tilde \mu_0 \tilde p_{k\tau},
\end{equation}
then from Lemma \ref{lemma:moment} $\tilde\mu\in\P_1$.
From the Markov property of the RB--IPS $\{\tilde X_{n\tau}\}_{n\Ge0}$, one has
\begin{equation}
	\tilde \mu \tilde p_{n\tau} = 
	\frac1N\sum_{k=0}^{N-1}
	(\tilde \mu_0 \tilde p_{k\tau}) \tilde p_{n\tau} = 
	\frac1N\sum_{k=0}^{N-1} 
	\tilde \mu_0 \tilde p_{k\tau} = \tilde \mu 
\end{equation}
for any $n\Ge0$. Therefore, $\tilde\mu$ is the invariant distribution of the Markov chain $\{\tilde X_{n\tau}\}_{n\Ge0}$. The uniqueness of $\tilde\mu$ follows from the contractivity in Theorem \ref{theorem:RBM_contractivity}.
\end{proof}
By choosing $\mu$ to be the invariant distribution in Theorem \ref{theorem:IPS_contractivity}, we have
\begin{equation}
	\W_f(\mu, \nu p_t) \Le e^{-ct} \W_f(\mu,\nu),~~~~
	\forall t\Ge0,
\end{equation}
which implies $\nu p_t$ converges to $\mu$ in the sense of the Wasserstein distance $\W_f$. Since $f(r)$ is equivalent to the 
Euclidean norm, Lemma \ref{lemma:moment} directly implies $\mu,\tilde\mu$ have the following first-moment estimation:
\begin{corollary}[moment]
\label{theorem:moment}
Under Assumptions \textnormal{\ref{assumption:kappa}} and \textnormal{\ref{assumption:interact}}, there exist a constant $D$ such that if the constant $L_K$ in Assumption \textnormal{\ref{assumption:interact}} satisfies
$$
	L_K < \frac{c_0\varphi_0\sigma^2}{16},
$$
then
\begin{equation}
	\int_{\mathbb R^{Nd}} \bigg(\frac1N\sum_{i=1}^N|x^i|\bigg)\mu(\d x),
	\int_{\mathbb R^{Nd}} \bigg(\frac1N\sum_{i=1}^N|x^i|\bigg)\tilde\mu(\d x)
	 \Le D,
\end{equation}
where $\mu,\tilde\mu$ are the invariant distributions of the IPS \eqref{IPS} and the RB--IPS \eqref{RBM_IPS} respectively.
The constant $D$ does not depend on the number of particles $N$, the time step $\tau$ or the batch size $p$.
\end{corollary}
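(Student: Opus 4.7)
The plan is to derive this as an immediate consequence of Lemma~\ref{lemma:moment} by choosing the initial distribution $\nu$ to be the invariant distribution itself. First I would invoke Theorem~\ref{theorem:existence} to ensure that $\mu \in \P_1$ and $\tilde\mu \in \P_1$, so that Lemma~\ref{lemma:moment}(i) and (ii) can legitimately be applied with $\nu = \mu$ and $\nu = \tilde\mu$ respectively.

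Next I would exploit the invariance identities $\mu p_t = \mu$ for all $t \Ge 0$ and $\tilde\mu \tilde p_{n\tau} = \tilde\mu$ for all $n \Ge 0$. These make the maps
\begin{equation*}
t \mapsto \int_{\mathbb R^{Nd}} \bigg(\frac1N\sum_{i=1}^N |x^i|\bigg)(\mu p_t)(\d x), \qquad n \mapsto \int_{\mathbb R^{Nd}} \bigg(\frac1N\sum_{i=1}^N |x^i|\bigg)(\tilde\mu \tilde p_{n\tau})(\d x)
\end{equation*}
constant, equal to the first moments of $\mu$ and $\tilde\mu$ respectively. Since a constant sequence coincides with its $\varlimsup$, applying Lemma~\ref{lemma:moment}(i) with $\nu = \mu$ gives the bound on the first moment of $\mu$ by $D$, and applying Lemma~\ref{lemma:moment}(ii) with $\nu = \tilde\mu$ gives the analogous bound for $\tilde\mu$. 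The constant $D$ is precisely the one supplied by Lemma~\ref{lemma:moment}, and the uniformity in $N$, $\tau$, $p$ is inherited verbatim from that lemma.

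There is no substantive obstacle here: the corollary is essentially a packaging result, since all the analytical work—in particular constructing a Lyapunov-type estimate that survives in the long-time limit and does not degrade with $N$, $\tau$, $p$—has already been done in Lemma~\ref{lemma:moment}. The only point that requires a moment of care is confirming membership in $\P_1$ before invoking the lemma, and this has been established in Theorem~\ref{theorem:existence} whose construction produces $\mu$ and $\tilde\mu$ as finite averages of $\P_1$-valued iterates under the respective transition kernels.
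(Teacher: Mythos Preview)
Your proposal is correct and matches the paper's reasoning: the corollary is deduced directly from Lemma~\ref{lemma:moment}, with Theorem~\ref{theorem:existence} supplying $\mu,\tilde\mu\in\P_1$. The paper phrases it via the convergence $\nu p_t\to\mu$ in $\W_f$ (hence in $\W_1$) and then passes the $\varlimsup$ bound from Lemma~\ref{lemma:moment} to the limit, whereas you plug $\nu=\mu$ (resp.\ $\nu=\tilde\mu$) directly and use invariance to make the sequence constant; these are trivially equivalent routes to the same conclusion.
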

Although the invariant distribution $\tilde\mu$  depends on the time step $\tau$, the constant $D$ in Corollary \ref{theorem:moment} is independent of $\tau$. This means the estimate of the first-order moments of $\tilde\mu$ is uniform in $\tau$.
\begin{remark}
The Banach fixed point theorem in the metric space $\P_1$ only implies $\mu,\tilde\mu$ have finite first-order moments, and does not guarantee $\mu,\tilde\mu$ have higher order moments, despite the fact that $\nu p_t$ and $\nu \tilde p_t$ has finite $\alpha$-th order moments for any $\alpha\Ge2$.
\end{remark}

\subsection{Strong error estimation in finite time}

In stochastic analysis, the strong error relates to the trajectory difference between two stochastic processes. Suppose the IPS $X_t$ and the RB--IPS $\tilde X_t$ are driven by the same Wiener process $W_t$ in $\mathbb R^{Nd}$, and the initial state $X_0 = \tilde X_0$ is sampled from the same distribution $\nu\in \P_1$. In other words, $X_t$ and $\tilde X_t$ are coupled in the synchronous coupling scheme. Define the strong error between the trajectories $X_t$ and $\tilde X_t$ by
\begin{equation}
	J(t) = \frac1{2N}
	\sum_{i=1}^N \E|\tilde X_t^i - X_t^i|^2,~~~~
	t\Ge0.
\end{equation}
We aim to estimate $J(t)$ in a finite interval $t\in[0,T]$, and derive the upper bound of $J(t)$ in terms of $\tau$. Except for Assumptions \ref{assumption:kappa} and \ref{assumption:interact}, we additionally require:
\begin{assumption}[bounded]
\label{assumption:bound}
There exists constants $C>0$ and $q\Ge2$ such that
\begin{equation}
	\max\{|b(x)|,|\nabla b(x)|\} \Le C(|x|+1)^q,~~~~
	\forall x\in\mathbb R^d.
\end{equation}
\end{assumption}
\begin{remark}
The requirement $q\Ge2$ in Assumption \textnormal{\ref{assumption:bound}} is merely for technical convenience.
\end{remark}
To analyze $J(t)$ is different time steps, define the filtration
\begin{equation}
	\mathcal F_n = \sigma(\nu,\{W_t\}_{t\Le t_n},\{\D_k\}_{0\Le k\Le n}).
\end{equation}
That is, $\mathcal F_n$ is determined by the initial distribution $\nu$, the Wiener process $W_t$ before $t_n$ and the divisions $\D_k$ in the first $n+1$ time steps. Under the condition of $\mathcal F_n$, the RB--IPS in the time interval $[t_n,t_{n+1})$ is evolved by \eqref{RBM_IPS}. Now we have the following estimate of the $\alpha$-th order moments.
\begin{lemma}[moment]
\label{lemma:moment_alpha}
Under Assumptions \textnormal{\ref{assumption:kappa}} and \textnormal{\ref{assumption:interact}}, for any given constant $\alpha\Ge 2$, there exist positive constants $C,\beta$ depending on $\alpha$ such that for any $i\in\{1,\cdots,N\}$,
\begin{equation}
	\frac{\d}{\d t}
	\E|X_t^i|^\alpha \Le 
	-\beta\cdot \E|X_t^i|^\alpha + C,~~~~
	\forall t\Ge0,
\end{equation}
and
\begin{equation}
	\frac{\d}{\d t}
	\E
	\Big(
	|\tilde X_t^i|^\alpha
	\big| \mathcal F_n
	\Big) \Le -\beta\cdot 
	\E
		\Big(
		|\tilde X_t^i|^\alpha
		\big| \mathcal F_n
		\Big) + C,~~~~
	t\in[t_n,t_{n+1}).
	\label{tilde_X_rate}
\end{equation}
The constants $C,\beta$ do not depend on the number of particles $N$, the time step $\tau$ or the batch size $p$.
\end{lemma}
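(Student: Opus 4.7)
The plan is to apply Itô's formula to $x\mapsto|x|^\alpha$; for $\alpha\Ge 2$ this function is $C^2$ on $\mathbb R^d$ since $\nabla^2|x|^\alpha=\alpha|x|^{\alpha-2}I+\alpha(\alpha-2)|x|^{\alpha-4}xx^\mathrm{T}$ is pointwise bounded by $\alpha(\alpha-1)|x|^{\alpha-2}$ (if strict smoothness at the origin is a concern, one first applies Itô to the regularization $(|x|^2+\epsilon_0)^{\alpha/2}$ and lets $\epsilon_0\to 0$). A standard localization argument kills the Itô martingale term in expectation, yielding
\begin{equation*}
	\frac{\d}{\d t}\E|X_t^i|^\alpha = \alpha\,\E\bigl[|X_t^i|^{\alpha-2}X_t^i\cdot(b(X_t^i)+\gamma^i(X_t))\bigr] + \tfrac{\sigma^2\alpha(\alpha+d-2)}{2}\,\E|X_t^i|^{\alpha-2},
\end{equation*}
where $\gamma^i$ is given by \eqref{gamma_IPS} for the IPS and by \eqref{gamma_RBM} for the RB--IPS.

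The key step is to extract a coercive $-c|x|^\alpha$ from the drift. Specializing \eqref{kappa} to the pair $(x,0)$ gives $x\cdot b(x)\Le -\tfrac{\sigma^2}{2}\kappa(|x|)|x|^2+|b(0)|\,|x|$, hence
\begin{equation*}
	|x|^{\alpha-2}x\cdot b(x) \Le -\tfrac{\sigma^2}{2}\kappa(|x|)|x|^\alpha + |b(0)|\,|x|^{\alpha-1}.
\end{equation*}
Under Assumption \ref{assumption:kappa} there is an $R>0$ with $\kappa(r)\Ge\tfrac12\varliminf_{r\to\infty}\kappa(r)>0$ for $r\Ge R$, while $\kappa$ is bounded below on $[0,R]$; splitting the two regions and absorbing the $|x|^{\alpha-1}$ factor via Young's inequality yields a pointwise bound $|x|^{\alpha-2}x\cdot b(x)\Le -c_1|x|^\alpha+C_1$ for constants $c_1,C_1>0$. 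For the interaction, Assumption \ref{assumption:interact} forces $|\gamma^i(x)|\Le L_K$ uniformly, both for \eqref{gamma_IPS} and for \eqref{gamma_RBM}, regardless of $N,p$ or the choice of batch; Young's inequality then bounds the interaction contribution by $\epsilon\,\E|X_t^i|^\alpha+C(\epsilon)$, and the same device handles the diffusion term $\E|X_t^i|^{\alpha-2}$ (trivial when $\alpha=2$). Choosing $\epsilon$ small enough so the net coefficient on $\E|X_t^i|^\alpha$ is strictly negative delivers the IPS inequality, with constants $\beta,C$ depending only on $\alpha,d,\sigma,|b(0)|,\kappa$ and $L_K$ — and thus manifestly independent of $N,\tau,p$.

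For the RB--IPS estimate \eqref{tilde_X_rate} I would run the same computation on a single interval $[t_n,t_{n+1})$ under the conditioning on $\mathcal F_n$: once $\mathcal F_n$ is fixed, the division $\D_n$ is determined and \eqref{RBM_IPS} becomes a standard SDE with the same drift $b$ and a perturbation $\gamma^i$ still obeying $|\gamma^i|\Le L_K$, so every estimate above transfers verbatim to the conditional expectation $\E[\,\cdot\,|\mathcal F_n]$. The main obstacle I anticipate is purely technical — justifying the vanishing of the Itô martingale while maintaining uniform control under the stopping $\tau_n=\inf\{t:|X_t^i|>n\}$, and sidestepping the mild non-smoothness of $|x|^\alpha$ at the origin via the regularization above — neither of which introduces any $N,\tau,p$ dependence, so the desired uniformity is preserved throughout.
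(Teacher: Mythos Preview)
Your proposal is correct and follows essentially the same route as the paper: Itô's formula applied to $|x|^\alpha$, the drift controlled via the inequality $x\cdot b(x)\Le -\tfrac{\sigma^2}{2}\kappa(|x|)|x|^2+|b(0)||x|$ together with the asymptotic positivity and lower-boundedness of $\kappa$, the interaction handled by the uniform bound $|\gamma^i|\Le L_K$ from Assumption~\ref{assumption:interact}, and the lower-order terms absorbed by Young/interpolation; the RB--IPS case is then the same computation conditioned on $\mathcal F_n$. Your treatment is in fact slightly more careful than the paper's in flagging the regularization and localization issues, but the substance of the argument is identical.
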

The proof of Lemma \ref{lemma:moment_alpha} is left in Appendix, and is similar to Lemma 3.3 in \cite{RBM_original}.
The asymptotic positivity of the function $\kappa(r)$ in Assumption \ref{assumption:kappa} is essential to produce the negative coefficient $-\beta$ in \eqref{tilde_X_rate}. By Lemma \ref{lemma:moment_alpha}, we immediately deduce that both $X_t,\tilde X_t$ have finite $\alpha$-th order moments:
\begin{lemma}[moment]
\label{lemma:moment_uniform}
Under Assumptions \textnormal{\ref{assumption:kappa}} and \textnormal{\ref{assumption:interact}}, for any given constant $\alpha\Ge2$, if there exists a constant $M$ such that the initial distribution $\nu$ satisfies
$$
	\max_{1\Le i\Le N}\int_{\mathbb R^{Nd}}
	|x^i|^\alpha \nu(\d x) \Le M,
$$
then there exists a constant $C$ depending on $M,\alpha$ such that
\begin{equation}
	\sup_{t\Ge 0} 
	\E|X_t^i|^\alpha \Le C,~~
		\sup_{t\Ge 0} 
		\E|\tilde X_t^i|^\alpha \Le C.
\end{equation}
The constant $C$ does not depend on the number of particles $N$, the time step $\tau$ or the batch size $p$.
\end{lemma}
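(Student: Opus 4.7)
The statement says two things of the same flavor, one for the exact IPS and one for the random-batch system, and both are direct consequences of the differential inequalities established in Lemma~\ref{lemma:moment_alpha}. The strategy is simply to integrate those linear first-order inequalities via Gr\"onwall's lemma and then, for the random-batch case, stitch the per-interval bounds together by induction on $n$. No new dynamical information beyond Lemma~\ref{lemma:moment_alpha} is needed, and every constant produced inherits the $N, \tau, p$-independence from that lemma.

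\textbf{Exact dynamics.} For the IPS, Lemma~\ref{lemma:moment_alpha} gives the scalar ODE inequality
\begin{equation*}
\frac{\d}{\d t}\E|X_t^i|^\alpha \Le -\beta \E|X_t^i|^\alpha + C.
\end{equation*}
Gr\"onwall's inequality yields
\begin{equation*}
\E|X_t^i|^\alpha \Le e^{-\beta t}\E|X_0^i|^\alpha + \frac{C}{\beta}(1-e^{-\beta t}) \Le M + \frac{C}{\beta},
\end{equation*}
which is the desired uniform-in-$t$ bound, with a constant depending only on $M$ and $\alpha$ (through $C, \beta$).

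\textbf{Random-batch dynamics.} Here the inequality of Lemma~\ref{lemma:moment_alpha} is only conditional on $\mathcal{F}_n$ and only valid on $[t_n, t_{n+1})$. The plan is to apply Gr\"onwall on each interval conditionally, then take expectations to eliminate the filtration:
\begin{equation*}
\E|\tilde X_t^i|^\alpha \Le e^{-\beta(t-t_n)}\E|\tilde X_{t_n}^i|^\alpha + \frac{C}{\beta}\bigl(1-e^{-\beta(t-t_n)}\bigr),\quad t\in[t_n,t_{n+1}).
\end{equation*}
Setting $a_n := \E|\tilde X_{t_n}^i|^\alpha$, evaluating at $t = t_{n+1}^-$ and passing to the limit by continuity of trajectories yields the discrete recursion
\begin{equation*}
a_{n+1} \Le e^{-\beta\tau} a_n + \frac{C}{\beta}(1-e^{-\beta\tau}).
\end{equation*}
Since this is an affine contraction with fixed point $C/\beta$, induction on $n$ gives $a_n \Le \max\{M, C/\beta\}$ for every $n$, and then plugging back into the interval bound controls $\E|\tilde X_t^i|^\alpha$ for all $t \Ge 0$ by the same constant (independent of $N, \tau, p$).

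\textbf{Main obstacle.} There is no real obstacle at this step: the heavy lifting --- producing the dissipative ODE inequalities with $\beta, C$ independent of $N, \tau, p$ --- has already been done in Lemma~\ref{lemma:moment_alpha}. The only minor point to be careful about is that for the random-batch case one must take expectation before chaining the intervals, and note that the right-hand-side constant $C/\beta$ of the contraction is the same on every interval so that the induction does not accumulate. Choosing the final constant as $\max\{M, C/\beta\}$ (up to the dependence of $C, \beta$ on $\alpha$) completes the proof.
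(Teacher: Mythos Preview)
Your proposal is correct and matches the paper's approach. The paper does not spell out a separate proof of this lemma but states it as an immediate consequence of Lemma~\ref{lemma:moment_alpha}; the implicit argument is exactly the one you give (Gr\"onwall for the IPS, and for the RB--IPS the per-interval conditional Gr\"onwall followed by taking expectations and chaining the resulting affine recursion), which is also the pattern the paper uses explicitly in the proof of Lemma~\ref{lemma:moment} for the first moment.
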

\begin{remark}
The constant $C$ in Lemma \ref{lemma:moment_uniform} depends on the moments of the intial distribution $\nu$, hence if one wishes $C$ to be independent of $N$, the moment upper bound $M$ should be also independent of $N$. In particular, if one chooses the initial distribution $\nu$ to be frozen at the origin, then the constant $C$ only relies on $\alpha$.
\end{remark}
The following strong error estimation is exactly the same with the results in \cite{RBM_error}, thus we only present their main theorem here. The detailed proof can be seen at Theorem 3.1 in \cite{RBM_error}.
\begin{theorem}[strong]
\label{theorem:strong_error}
Under Assumptions \textnormal{\ref{assumption:kappa}}, \textnormal{\ref{assumption:interact}} and \textnormal{\ref{assumption:bound}}, if there exists a constant $M$ such that the initial distribution $\nu$ satisfies
$$
	\max_{1\Le i\Le N}\int_{\mathbb R^{Nd}}
	|x^i|^{2q} \nu(\d x) \Le M,
$$
then for any $T>0$, there exists a constant $C$ depending on $T$ and $M$ such that
\begin{equation}
	\sup_{0\Le t\Le T} J(t) \Le C\bigg(
		\frac{\tau}{p-1} + \tau^2
	\bigg).
	\label{estimate_J}
\end{equation}
The constant $C$ does not depend on the number of particles $N$, the time step $\tau$ or the batch size $p$.
\end{theorem}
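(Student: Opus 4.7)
The plan is to derive a Gronwall-type differential inequality for $J(t)$ and integrate it on $[0,T]$. Set $Z_t^i := \tilde X_t^i - X_t^i$. Since both $X_t$ and $\tilde X_t$ are driven by the same Wiener process and start from the same initial value, the diffusion integrals cancel and $Z_t^i$ is absolutely continuous with
$$
\dot Z_t^i = \bigl(b(\tilde X_t^i) - b(X_t^i)\bigr) + \chi_t^i,
\qquad
\chi_t^i := \frac{1}{p-1}\sum_{j\ne i,\,j\in\C} K(\tilde X_t^i - \tilde X_t^j) - \frac{1}{N-1}\sum_{j\ne i} K(X_t^i - X_t^j).
$$
Applying It\^o to $|Z_t^i|^2$, summing over $i$, and taking expectations yields
$$
\frac{\d}{\d t} J(t) = \frac{1}{N}\sum_{i=1}^N \E\bigl[Z_t^i\cdot\bigl(b(\tilde X_t^i)-b(X_t^i)\bigr)\bigr] + \frac{1}{N}\sum_{i=1}^N \E[Z_t^i\cdot \chi_t^i].
$$

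I split $\chi_t^i = A_t^i + B_t^i$ by adding and subtracting $\tfrac{1}{N-1}\sum_{j\ne i} K(\tilde X_t^i - \tilde X_t^j)$. The term $A_t^i$ contains only trajectory differences of $K$; Assumption \ref{assumption:interact} gives $\tfrac{1}{N}\sum_i \E[Z_t^i\cdot A_t^i] \Le C\,J(t)$, and the drift-difference term is likewise bounded by $C\,J(t)$ after combining the polynomial growth in Assumption \ref{assumption:bound} with the uniform $2q$-th moment control from Lemma \ref{lemma:moment_uniform} (this is where the hypothesis on $\nu$ enters). These contributions yield the Gronwall coefficient. The remaining term $B_t^i$ is the pure random-batch fluctuation, and is the delicate part.

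The crucial observation is that the batch $\D_n$ drawn at $t_n$ is independent of all prior randomness, and the unbiased identity \eqref{unbiased_condition} gives $\E[B_{t_n}^i \mid \mathcal G] = 0$, where $\mathcal G$ denotes the $\sigma$-algebra generated by everything except $\D_n$. Because $Z_{t_n}^i$ is $\mathcal G$-measurable, $\E[Z_{t_n}^i\cdot B_{t_n}^i] = 0$. On each subinterval $[t_n, t_{n+1})$ I then expand $Z_t^i = Z_{t_n}^i + \int_{t_n}^t \dot Z_s^i\,\d s$ and $B_t^i = B_{t_n}^i + \int_{t_n}^t \dot B_s^i\,\d s$, producing cross terms which I control via Cauchy--Schwarz using: (i) the sampling-without-replacement variance estimate $\E|B_{t_n}^i|^2 \Le C/(p-1)$, which follows from $|K|\Le L_K$ in Assumption \ref{assumption:interact}; (ii) the short-time SDE increments $\E|Z_t^i - Z_{t_n}^i|^2 \Le C\tau^2$ and $\E|B_t^i - B_{t_n}^i|^2 \Le C(t-t_n)^2$, obtained by integrating the drift and applying $\nabla K$ along the SDE, both of which require the uniform $2q$-th moment bound from Lemma \ref{lemma:moment_alpha} to tame the polynomial growth of $b$.

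Assembling these estimates produces an inequality of the form $\d J/\d t \Le C_1 J(t) + C_2\bigl(\tau/(p-1) + \tau^2\bigr)$, with constants independent of $N$, $\tau$, and $p$. Since $J(0) = 0$, Gronwall's lemma on $[0,T]$ delivers the claimed bound with a constant $C$ depending on $T$ and $M$. The main obstacle is the bookkeeping of the short-time increments across every subinterval $[t_n, t_{n+1})$ inside $[0,T]$: one must verify that the $L^2$-bounds on $B_t^i - B_{t_n}^i$ and $Z_t^i - Z_{t_n}^i$ hold with constants uniform in $n$, which is precisely what the time-uniform moment estimates of Lemma \ref{lemma:moment_alpha} supply, and this is also the reason that a $2q$-th (rather than merely second) moment assumption on $\nu$ is imposed.
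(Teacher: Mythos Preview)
The paper does not give its own proof of this theorem; it simply cites \cite{RBM_error} and states that ``the detailed proof can be seen at Theorem~3.1'' there. Your sketch follows exactly that strategy (synchronous coupling, split $\chi_t^i=A_t^i+B_t^i$, unbiasedness of the batch noise at $t_n$, short-time expansion, Gronwall), so in spirit it matches the reference the paper defers to. Two technical points in your write-up deserve correction, however.

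First, the bound $\tfrac1N\sum_i\E\bigl[Z_t^i\cdot(b(\tilde X_t^i)-b(X_t^i))\bigr]\Le C\,J(t)$ does not follow from polynomial growth of $\nabla b$ together with moment control: that route produces $\E\bigl[(1+|X_t^i|+|\tilde X_t^i|)^q|Z_t^i|^2\bigr]$, and you cannot separate the weight from $|Z_t^i|^2$ without higher moments of $Z$ (which is circular). The clean argument uses Assumption~\ref{assumption:kappa}: since $\kappa$ is bounded below, the drift is one-sided Lipschitz, i.e.\ $(x-y)\cdot(b(x)-b(y))\Le C|x-y|^2$, which gives the bound pointwise. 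The polynomial-growth Assumption~\ref{assumption:bound} and the $2q$-th moments are genuinely needed, but elsewhere: in controlling $\E|Z_t^i-Z_{t_n}^i|^2$ via $|\dot Z_s^i|\Le |b(\tilde X_s^i)|+|b(X_s^i)|+4L_K$.

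Second, the assertion $\E|B_t^i-B_{t_n}^i|^2\Le C(t-t_n)^2$ is false as stated: $B_t^i$ is a Lipschitz function of $\tilde X_t$, which carries a Brownian component, so the $L^2$ increment is only $O(t-t_n)$. If you then apply Cauchy--Schwarz to $\E[Z_{t_n}^i\cdot(B_t^i-B_{t_n}^i)]$ you get a forcing of order $\sqrt{\tau}$, which is too crude. The fix (and what \cite{RBM_error} actually does) is to apply It\^o's formula to the product $Z_s^i\cdot B_s^i$: since $Z$ has no diffusion, the stochastic integral arising from $dB_s^i$ is a genuine martingale increment against the $\mathcal F_{t_n}$-measurable $Z_{t_n}^i$, so it vanishes in expectation. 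What remains are drift terms bounded by $C(J(s)+\tfrac{1}{p-1})$, and integrating over $[t_n,t]$ produces the factor of $\tau$ in $\tau/(p-1)$. With these two repairs your Gronwall argument goes through.
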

A slight difference between the statement of Theorem \ref{theorem:strong_error} in this paper and Theorem 3.1 in \cite{RBM_error} is that the latter one does not specify the conditions on the initial distribution $\nu$ explicitly. In fact, finiteness of the $2q$-th order moments is enough to obtain the estimation of $J(t)$ in \eqref{estimate_J}.

Now we can estimate the Wasserstein distance $\W_1(\nu p_t,\nu \tilde p_t)$ using the estimate of $J(t)$, where $\nu$ is the initial distribution, and $p_t,\tilde p_t$ are the transition kernels of the IPS \eqref{IPS} and the RB--IPS \eqref{RBM_IPS}. Recall that $\W_1$-distance between two probability distributions $\mu,\nu\in\P_1$ is defined by 
$$
	\W_1(\mu,\nu) = 
	\inf_{\gamma\in\Pi(\mu,\nu)}
	\int_{\mathbb R^{Nd}\times\mathbb R^{Nd}}
	\bigg(
	\frac1N\sum_{i=1}^N |x^i-y^i|
	\bigg) \gamma(\d x\d y),
$$
hence if we choose $\gamma$ to be the synchronous coupling (driven by the same Wiener process $W_t$), the Wasserstein distance $\W_1(\nu p_t,\nu \tilde p_t)$ can be bounded by
\begin{align*}
	\W_1(\nu p_t,\nu \tilde p_t) & \Le \E\bigg(
	\frac1N\sum_{i=1}^N |X^i-\tilde X_t^i|
	\bigg) \\
	& \Le \sqrt{
		\E
		\bigg(\textit{}
		\frac1N\sum_{i=1}^N |X_t^i-\tilde X_t^i|
		\bigg)^2 
	} \\
	& \Le 
	\sqrt{
	\frac1N\sum_{i=1}^N \E|X_t^i-\tilde X_t^i|^2
	} = \sqrt{2 J(t)},
\end{align*}
that is, $\W_1(\nu p_t,\nu \tilde p_t) \Le \sqrt{2J(t)}$.
Therefore, the estimate of $J(t)$ immediately follows.
\begin{corollary}[Wasserstein]
Under Assumptions \textnormal{\ref{assumption:kappa}}, \textnormal{\ref{assumption:interact}} and \textnormal{\ref{assumption:bound}}, if there exists a constant $M$ such that the initial distribution $\nu$ satisfies
$$
	\max_{1\Le i\Le N}\int_{\mathbb R^{Nd}}
	|x^i|^{2q} \nu(\d x) \Le M,
$$
then for any $T>0$, there exists a constant $C$ depending on $T$ and $M$ such that
\begin{equation}
	\sup_{0\Le t\Le T} \W_1(\nu p_t,\nu \tilde p_t) \Le C\sqrt{
		\frac{\tau}{p-1} + \tau^2
	}.
	\label{estimate_W}
\end{equation}
The constant $C$ does not depend on the number of particles $N$, the time step $\tau$ or the batch size $p$.
\label{theorem:Wasserstein_error}
\end{corollary}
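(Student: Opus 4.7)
The plan is to reduce the corollary directly to Theorem \ref{theorem:strong_error} using a synchronous coupling. I would construct a joint distribution $\gamma_t \in \Pi(\nu p_t,\nu \tilde p_t)$ by driving the IPS \eqref{IPS} and the RB--IPS \eqref{RBM_IPS} with the \emph{same} Wiener process $W_t$ in $\mathbb R^{Nd}$ and choosing $X_0 = \tilde X_0 \sim \nu$. Since this coupling respects the two marginal laws, it is a valid competitor in the infimum defining $\W_1(\nu p_t,\nu\tilde p_t)$, giving
\[
\W_1(\nu p_t,\nu \tilde p_t) \Le \E\bigg(\frac1N\sum_{i=1}^N |X_t^i - \tilde X_t^i|\bigg).
\]

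Next I would apply two standard inequalities to reach the strong-error quantity $J(t)$. Cauchy--Schwarz across the index $i$ (equivalently Jensen's inequality for the concave square root on the uniform probability measure on $\{1,\ldots,N\}$) yields
\[
\frac1N\sum_{i=1}^N |X_t^i-\tilde X_t^i| \Le \sqrt{\frac1N\sum_{i=1}^N |X_t^i-\tilde X_t^i|^2},
\]
and a further application of Jensen in the form $\E\sqrt{Z} \Le \sqrt{\E Z}$ moves the expectation inside the square root, producing $\W_1(\nu p_t,\nu \tilde p_t) \Le \sqrt{2 J(t)}$ for every $t \Ge 0$.

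The final step is to invoke Theorem \ref{theorem:strong_error}, which applies because the hypothesis on the $2q$-th moments of $\nu$ is identical in both statements. That theorem supplies a constant $C' = C'(T,M)$ with
\[
\sup_{0\Le t\Le T} J(t) \Le C'\bigg(\frac{\tau}{p-1}+\tau^2\bigg),
\]
and taking the supremum followed by the square root gives the claimed bound with $C = \sqrt{2C'}$. The independence of $C$ from $N$, $\tau$, and $p$ is inherited directly from Theorem \ref{theorem:strong_error}.

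The proof presents essentially no genuine obstacle, because all the delicate analysis --- uniform-in-time moment propagation for the RB--IPS and tracking the batched-interaction trajectory error over $[0,T]$ --- has already been absorbed into Theorem \ref{theorem:strong_error}. The only thing worth verifying carefully is that the synchronously coupled pair $(X_t,\tilde X_t)$ really does have marginals $\nu p_t$ and $\nu \tilde p_t$ respectively, which is immediate since each component is, by construction, a strong solution of the correct SDE driven by $W_t$ started from $\nu$, and the RB--IPS marginal is insensitive to the choice of coupling with the IPS.
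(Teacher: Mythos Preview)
Your proposal is correct and matches the paper's own argument essentially line for line: the paper also uses the synchronous coupling to bound $\W_1(\nu p_t,\nu\tilde p_t)$ by $\sqrt{2J(t)}$ via Cauchy--Schwarz/Jensen, and then invokes Theorem~\ref{theorem:strong_error}. The only cosmetic difference is the order in which the two Jensen-type inequalities are applied, which is immaterial.
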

When the batch size $p$ is small, $\sqrt{\tau/(p-1)}$ dominates the Wasserstein error $\W_1(\nu p_t,\nu \tilde p_t)$. In this sense, the Wasserstein error $\W_1(\nu p_t,\nu \tilde p_t)$ has at least half-order convergence in the time step $\tau$.

\subsection{Estimate of $\W_1(\mu,\tilde\mu)$}

Now we estimate $\W_1(\mu,\tilde\mu)$, using the results derived in previous sections.
\begin{theorem}[error]
\label{theorem:overall_error}
Under Assumptions \textnormal{\ref{assumption:kappa}}, \textnormal{\ref{assumption:interact}} and \textnormal{\ref{assumption:bound}}, there exists a constant $C$ such that if the constant $L_K$ in Assumption $\ref{assumption:interact}$ satisfies
$$
	L_K < \frac{c_0\varphi_0\sigma^2}{16},
$$
then the invariant distributions $\mu,\tilde\mu$ of the IPS \eqref{IPS} and the RB--IPS \eqref{RBM_IPS} satisfy
\begin{equation}
	\W_1(\mu,\tilde\mu) \Le C\sqrt{\frac{\tau}{p-1}+\tau^2}.
\end{equation}
The constant $C$ does not depend on the number of particles $N$, the time step $\tau$ or the batch size $p$.
\end{theorem}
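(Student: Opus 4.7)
The plan is to carry out the triangle-inequality strategy sketched in \eqref{triangle_inequality}--\eqref{d_order_alpha}, combining the contractivity estimate of Theorem \ref{theorem:RBM_contractivity} with the finite-time strong-error bound of Corollary \ref{theorem:Wasserstein_error}. Since $\mu=\mu p_{n\tau}$ and $\tilde\mu=\tilde\mu\tilde p_{n\tau}$ for every $n\Ge0$, the first step is to write
\[
\W_f(\mu,\tilde\mu)\Le\W_f(\mu p_{n\tau},\mu\tilde p_{n\tau})+\W_f(\mu\tilde p_{n\tau},\tilde\mu\tilde p_{n\tau})\Le\W_f(\mu p_{n\tau},\mu\tilde p_{n\tau})+e^{-cn\tau}\W_f(\mu,\tilde\mu),
\]
where the second inequality uses Theorem \ref{theorem:RBM_contractivity}. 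Fixing $T_0:=(\log 2)/c$ and choosing the smallest $n$ with $n\tau\Ge T_0$ (so that $T_0\Le n\tau\Le T_0+\tau$) forces $e^{-cn\tau}\Le 1/2$, and rearrangement gives $\W_f(\mu,\tilde\mu)\Le 2\W_f(\mu p_{n\tau},\mu\tilde p_{n\tau})$.

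Next I would control the remaining term by Corollary \ref{theorem:Wasserstein_error} applied with initial distribution $\nu=\mu$ over the horizon $T=n\tau\Le T_0+\tau$. Since $f(r)$ is equivalent to $r$ by \eqref{f_equivalence}, $\W_f$ and $\W_1$ are equivalent as semimetrics, so
\[
\W_f(\mu p_{n\tau},\mu\tilde p_{n\tau})\Le\W_1(\mu p_{n\tau},\mu\tilde p_{n\tau})\Le C\sqrt{\tfrac{\tau}{p-1}+\tau^2},
\]
with $C$ depending only on $T_0$ (a fixed constant) and on the moment bound required by Corollary \ref{theorem:Wasserstein_error}. Translating back via $\W_1\Le(4/\varphi_0)\W_f$ and restricting to $\tau\Le 1$ then produces the claimed bound on $\W_1(\mu,\tilde\mu)$ with a constant independent of $N$, $\tau$, and $p$.

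The main technical obstacle is that Corollary \ref{theorem:Wasserstein_error} with $\nu=\mu$ requires $\mu$ to have uniformly bounded $(2q)$-th moment, whereas Corollary \ref{theorem:moment} supplies only the first-order moment directly from the Banach fixed-point argument. To upgrade, I would fix an auxiliary initial distribution $\nu_0\in\P_1$ with all polynomial moments finite---for instance $\nu_0=\delta_0$---and invoke Lemma \ref{lemma:moment_uniform} with $\alpha=2q$ to obtain $\sup_{t\Ge0}\E|X_t^i|^{2q}\Le C$ for the IPS started from $\nu_0$, with $C$ independent of $N,\tau,p$. By Corollary \ref{theorem:IPS_ergodicity}, $\nu_0 p_t\to\mu$ in $\W_f$ and hence weakly; applying Fatou's lemma to the truncations $|x^i|^{2q}\wedge R$ and letting $R\to\infty$ transfers the uniform $(2q)$-th moment bound from $\nu_0 p_t$ to the limit $\mu$. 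With this bound available, Corollary \ref{theorem:Wasserstein_error} applies to $\nu=\mu$ and the chain of inequalities closes. The same route yields an analogous moment bound on $\tilde\mu$, which is not strictly needed here but parallels the treatment of $\mu$.
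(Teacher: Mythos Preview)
Your approach is correct, but it resolves the moment obstacle differently from the paper. The paper \emph{never} establishes that $\mu$ has finite $(2q)$-th moments. Instead it works with $\W_1(\nu_0 p_T,\tilde\mu)$ for the frozen distribution $\nu_0=\delta_0$, applies the triangle inequality and the contractivity of $\tilde p_{n\tau}$ to obtain $\W_1(\nu_0 p_T,\tilde\mu)\Le 2\,\W_1(\nu_0 p_T,\nu_0 p_T\tilde p_{n\tau})$, then splits this further as $\W_1(\nu_0 p_T,\nu_0 p_T p_{n\tau})+\W_1(\nu_0 p_T p_{n\tau},\nu_0 p_T\tilde p_{n\tau})$. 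The first piece is bounded by $Ce^{-cT}\W_1(\nu_0,\nu_0 p_{n\tau})$ via Theorem~\ref{theorem:IPS_contractivity} and vanishes as $T\to\infty$; the second piece is controlled by Corollary~\ref{theorem:Wasserstein_error} with initial law $\nu_0 p_T$, whose $(2q)$-th moments are uniformly bounded in $T$ by Lemma~\ref{lemma:moment_uniform}. Only after both bounds are in place is the limit $T\to\infty$ taken. You instead transfer the uniform moment bound from $\nu_0 p_t$ to $\mu$ directly, using weak convergence and monotone convergence on the truncations $|x^i|^{2q}\wedge R$, and then apply Corollary~\ref{theorem:Wasserstein_error} with $\nu=\mu$. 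This is valid and arguably more direct; the paper's remark following the theorem explicitly flags your route as one it chose to sidestep, but your Fatou-type argument shows the detour was not strictly necessary.

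Two minor points. First, you invoke the triangle inequality for $\W_f$, which the paper labels a semimetric; in fact concavity of $f$ together with $f(0)=0$ yields subadditivity, so $f(|\cdot|)$ is a genuine metric and your use is legitimate---alternatively one can run the entire argument in $\W_1$ at the cost of a harmless constant, which is what the paper does. Second, your restriction to $\tau\Le1$ is adequate because for larger $\tau$ the right-hand side $\sqrt{\tau/(p-1)+\tau^2}$ is bounded below by $1$ while $\W_1(\mu,\tilde\mu)\Le\mathcal M_1(\mu)+\mathcal M_1(\tilde\mu)\Le 2D$ trivially by Corollary~\ref{theorem:moment}; the paper disposes of large $\tau$ in exactly this way.
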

The proof of Theorem \ref{theorem:overall_error} is basically the triangle inequality described in the introduction, but with minor difference.
\begin{proof}
For convenience, denote the first-order moment of $\nu\in \P_1$ by
\begin{equation}
	\mathcal M_1(\nu) = \int_{\mathbb R^{Nd}}
	\bigg(
	\frac1N\sum_{i=1}^N |x^i|
	\bigg)\nu(\d x),
\end{equation}
then by Corollary \ref{theorem:moment} $\mathcal M_1(\mu),\mathcal M_1(\tilde\mu)\Le D$. Hence it always holds that
\begin{equation}
	\W_1(\mu,\tilde\mu) \Le \mathcal M_1(\mu) + \mathcal M_1(\tilde\mu) \Le 2D,
\end{equation}
and we may assume $\tau<D$ in the following proof.
Let $\nu_0$ be the distribution in $\mathbb R^{Nd}$ with all the $N$ particles frozen at orgin, then the $2q$-th order moment of $\nu_0$ is $0$. By Lemma \ref{lemma:moment_uniform}, there exists a constant $M$ such that
\begin{equation}
	\sup_{t\Ge0}\bigg\{
	\max_{1\Le i\Le N}
	 \int_{\mathbb R^{Nd}} 
		|x^i|^{2q}
	(\nu_0 p_t)(\d x)\bigg\} < M.
\end{equation}
That is to say, the $2q$-th order moment of $\nu_0 p_t$ is always no greater than $M$.

Instead of directly measuring the distance $\W_1(\mu,\tilde\mu)$, we fix a constant $T>0$ and consider the distance $\W_1(\nu_0 p_T,\tilde\mu)$. By Theorem \ref{theorem:RBM_contractivity}, there exists a constant $C$ such that for any $n\Ge0$,
\begin{align*}
	\W_1(\nu_0 p_T,\tilde\mu) & =
	\W_1(\nu_0 p_T,\tilde\mu\tilde p_{n\tau}) \\
	& \Le \W_1(\nu_0 p_T\tilde p_{n\tau},\tilde\mu\tilde p_{n\tau}) + 
	\W_1(\nu_0 p_T,\nu_0 p_T\tilde p_{n\tau}) \\
	& \Le Ce^{-cn\tau} \W_1(\nu_0 p_T,\tilde\mu) + 
	\W_1(\nu_0 p_T,\nu_0 p_T\tilde p_{n\tau})	
\end{align*}
For given value of $\tau<D$, if one chooses the integer $n$ to be
\begin{equation}
	n = \bigg\lceil
		\frac{\log(2C)}{c\tau}
	\bigg\rceil,
\end{equation}
then $Ce^{-cn\tau}\Le\frac12$ and 
\begin{equation}
	n\tau \Le \bigg(\frac{\log(2C)}{c\tau} + 1\bigg)\tau \Le 
	\frac{\log(2C)}{c} + D,
\end{equation}
hence $n\tau$ has an upper bound. For this chosen $n$ one has
\begin{align*}
	\W_1(\nu_0 p_T,\tilde\mu) & \Le 
	2\cdot \W_1(\nu_0 p_T,\nu_0 p_T\tilde p_{n\tau}) \\
	& \Le 2\cdot \W_1(\nu_0p_T,\nu_0 p_{T}p_{n\tau}) + 2\cdot
	\W_1(\nu_0 p_T p_{n\tau},\nu_0 p_T\tilde p_{n\tau}) \\
	& \Le C e^{-cT} \W_1(\nu_0,\nu_0 p_{n\tau}) + 2\cdot
	\W_1(\nu_0 p_T p_{n\tau},\nu_0 p_T\tilde p_{n\tau}).
\end{align*}
Passing to the limit $T\rightarrow\infty$ gives
\begin{equation}
	\W_1(\mu,\tilde\mu) \Le 2\varlimsup_{T\rightarrow\infty}
	\W_1(\nu_0 p_T p_{n\tau},\nu_0 p_T\tilde p_{n\tau}).
	\label{W1}
\end{equation}
Note that $\nu_0 p_T$ always has finite $2q$-th order moments, hence by Corollary \ref{theorem:Wasserstein_error},
\begin{equation}
	\W_1(\nu_0 p_T p_{n\tau},\nu_0 p_T\tilde p_{n\tau}) \Le 
	C\sqrt{\frac{\tau}{p-1}+\tau^2},~~~~
	\forall T>0,
	\label{W2}
\end{equation}
where the constant $C$ does not depend on $N,\tau,p$ or the choice of $T$. Combining \eqref{W1}\eqref{W2} we obtain the estimate of $\W_1(\mu,\tilde\mu)$:
\begin{equation}
	\W_1(\mu,\tilde\mu) \Le C\sqrt{\frac{\tau}{p-1}+\tau^2},
\end{equation}
which is exactly the result we need.
\end{proof}
\begin{remark}
	We estimate the distance $\W_1(\nu_0 p_T,\tilde\mu)$ instead of $\W_1(\mu,\tilde\mu)$ because it is nontrivial to prove the invariant distributions $\mu,\tilde\mu\in\P_1$ has finite $2q$-th order moments. Therefore, we use a series of distributions $\{\nu_0 p_T\}_{T\Ge0}$ to approximate $\mu$, where the moments of $\nu_0 p_T$ can be easily derived.
\end{remark}
\begin{remark}
In this framework, the order of accuracy in the estimation of $\W_1(\mu,\tilde\mu)$ cannot be greater than the order of the strong error. It is still an open question whether it is possible to apply the weak error estimation instead of the strong one in this framework to estimate the difference between $\mu$ and $\tilde\mu$. In this work, the main difficulty is that we can only derive the geometric ergodicity in the sense of the Wasserstein distance, which is stronger than the weak error.
\end{remark}
\begin{appendices}
\section{Proof of main results}
\label{section:appendix}
\textbf{Proof of Lemma \ref{lemma:distance}}
Under Assumption \ref{assumption:kappa}, define the constants $R_0,R_1\Ge 0$ by
\begin{align}
	R_0 & := \inf\{R\Ge0: \kappa(r) \Ge0,\forall r\Ge R\},
	\\
	R_1 & := \inf\{R\Ge R_0: \kappa(r) R(R-R_0) \Ge 16,
	\forall r\Ge R\}.
\end{align}
The existence of $R_0,R_1$ is guaranteed by the asymptotic positivity of $\kappa(r)$. Also, one has $\kappa(r)\Ge 0$ for $r\Ge R_0$ and $\kappa(r) R_1(R_1-R_0) \Ge 16$ for $r\Ge R_1$. Given the function $\kappa(r)$, define the auxiliary functions $\varphi(r),\Phi(r),g(r)$ by
\begin{equation}
	\varphi(r) =  \exp\bigg(
		-\frac14 \int_0^r 
		s\kappa(s)^- \d s
	\bigg),~~~~
	\Phi(r) = \int_0^r \varphi(s)\d s,
	\label{proof:distance_f_1}
\end{equation}
\begin{equation}
	g(r) =
	\left\{
	\begin{aligned}
	& 1 - \frac12
	\int_0^{r}
	\frac{\Phi(s)}{\varphi(s)}\d s 
	\bigg/
	\int_0^{R_1}
		\frac{\Phi(s)}{\varphi(s)}\d s, && r\Le R_1 \\
	& \frac12-\frac{\eta(r-R_1)}{1+4\eta(r-R_1)}, && r>R_1
	\end{aligned}
	\right.
	\label{proof:distance_f_2}
\end{equation}
where $x^- = -\min\{x,0\}$ is the negative part of $x\in\mathbb R$ and the constant $\eta>0$ is defined by
\begin{equation}
	\eta = -g'(R_1) = \frac12\frac{\Phi(R_1)}{\varphi(R_1)} \bigg/
	\int_0^{R_1} \frac{\Phi(s)}{\varphi(s)}\d s.
	\label{proof:distance_f_3}
\end{equation}
\eqref{proof:distance_f_3} ensures that $g(r)$ is differentiable at $r=R_1$.
Finally, the distance function $f(r)$ is defined as
\begin{equation}
	f(r) = \int_0^r \varphi(s)g(s)\d s.
	\label{proof:distance_f_4}
\end{equation}
The only difference between Eq. \eqref{proof:distance_f_1}-\eqref{proof:distance_f_4} and the construction of $f(r)$ in \cite{reflection_original} is the definition of $g(r)$ for $r>R_1$. In our choice, $g(r)$ is differentiable at $r=R_1$ so that $f(r)$ is always twice differentiable, while in the original proof $f(r)\in C^1$ and $f'(r)$ is absolutely continuous.

From Eq. \eqref{proof:distance_f_1}-\eqref{proof:distance_f_4}, it is easy to verify the following properties of the functions $f(r),\varphi(r),\Phi(r),g(r)$:
\begin{enumerate}
\item $0<\varphi(r)\Le1$, $\frac14\Le g(r)\Le1$. $\varphi(0) = g(0) = 1$. $\Phi(0) = 0$.
\item The derivatives of $\varphi$ and $g$ are given by
\begin{equation}
	\varphi'(r) = -\frac14r\kappa(r)^- \varphi(r),~~~~~
	g'(r) = -\frac12 \frac{\Phi(r)}{\varphi(r)}
	\bigg/ \int_0^{R_1} \frac{\Phi(s)}{\varphi(s)}\d s,~~~~
	0\Le r\Le R_1.
\end{equation}
Hence $\varphi'(0) = g'(0) = 0$ and  $\varphi'(r)\Le0,g'(r)\Le0$ for all $r\Ge0$.
\item The second derivative of $f(r)$ is given by
\begin{equation}
	f''(r) = \varphi(r) g'(r) + \varphi'(r) g(r) \Le0,
\end{equation}
which implies $f(r)$ is concave for all $r\Ge0$.
\item When $r>R_0$,
\begin{equation}
	\varphi(r) \equiv \varphi_0 := \exp\bigg(
	-\frac14\int_0^{R_0}s\kappa(s)^-\d s
	\bigg),
\end{equation}
Since $\varphi(r)\Ge \varphi_0$ and $g(r)\Ge\frac14$ for all $r\Ge0$, one obtains the estimate
\begin{equation}
	f'(r) = \varphi(r) g(r) \Ge \frac{\varphi_0}4.
\end{equation}
which implies $f(r)\Ge \frac{\varphi_0}4r$ for all $r\Ge0$.
\item Since $g(r)\Le1$, 
\begin{equation}
	\Phi(r) = \int_0^r \varphi(s)\d s \Ge 
	\int_0^r \varphi(s) g(s)\d s = f(r).
\end{equation}
From $\Phi''(r) = \varphi'(r) \Le 0$, $\Phi(r)$ is also concave for $r\in[0,+\infty)$.
\end{enumerate}
Now one can prove the inequality \eqref{f_inequality} with the constant $c_0$ defined by
\begin{equation}
	\frac1{c_0} = \int_0^{R_1} \frac{\Phi(s)}{\varphi(s)}\d s.
	\label{proof:c_0}
\end{equation}
\begin{enumerate}
\item When $r\Le R_1$, using $f(r)\Le \Phi(r)$,
\begin{align*}
	f''(r) & = \varphi'(r) g(r) + \varphi(r) g'(r) \\
	& = -\frac14 r\kappa(r)^- \varphi(r) g(r) 
	- \frac12 \Phi(r) \bigg/
	\int_0^{R_1} \frac{\Phi(s)}{\varphi(s)}\d s \\
	& \Le \frac14 r \kappa(r) f'(r) - 
	\frac12 f(r) \bigg/
	\int_0^{R_1} \frac{\Phi(s)}{\varphi(s)}\d s,
\end{align*}
hence \eqref{f_inequality} holds with $c_0$ defined in \eqref{proof:c_0}.
\item When $r>R_1$, $f'(r) \Ge \varphi_0/4$, $f''(r)\Le 0$. Hence by the definition of $R_1$ and the concavity of $\Phi(r)$ with $\Phi(0) = 0$, one has
\begin{equation}
	f''(r) - \frac14r\kappa(r) f'(r) \Le 
	-\frac1{16} r\kappa(r) \varphi_0 
	\Le -\frac{\varphi_0}{R_1 - R_0} 
		\frac{r}{R_1} \\
		\Le -\frac{\varphi_0}{R_1 - R_0} \frac{\Phi(r)}{\Phi(R_1)}.
	\label{proof:verify_f_1}
\end{equation}
Since $\varphi(r) \equiv \varphi_0$ for $r\Ge R_0$, $\Phi(r)$ is linear in $r$, i.e.,
\begin{equation}
	\Phi(r) = \Phi(R_0) + (r-R_0)\varphi_0,~~~~r\Ge R_0.
\end{equation}
In particular, $\Phi(R_1) = \Phi(R_0) + (R_1-R_0)\varphi_0$, hence
\begin{equation}
	\int_{R_0}^{R_1} 
	\frac{\Phi(s)}{\varphi(s)}\d s = 
	\frac{\Phi(R_0)}{\varphi_0} (R_1 - R_0) + \frac12(R_1 - R_0)^2 \Ge \frac12(R_1 - R_0) \frac{\Phi(R_1)}{\varphi_0}.
	\label{proof:verify_f_2}
\end{equation} 
Combining \eqref{proof:verify_f_1}\eqref{proof:verify_f_2} one obtains
\begin{equation}
	f''(r) - \frac14 r\kappa(r) f'(r) \Le -\frac12\Phi(r)\bigg/
	\int_{R_0}^{R_1} 
	\frac{\Phi(s)}{\varphi(s)}\d s \Le 
	-\frac12 f(r) \bigg/
	\int_{0}^{R_1}
	\frac{\Phi(s)}{\varphi(s)}\d s,
\end{equation}
hence \eqref{f_inequality} holds with $c_0$ defined in \eqref{proof:c_0}.
\end{enumerate}
It is easy to see $\frac{\varphi_0}4r \Le f(r) \Le r$ for all $r\Ge0$.\\[12pt]
\textbf{Proof of Lemma \ref{lemma:sum_f_inequality}}
Using $b^i(x) = b(x^i) + \gamma^i(x)$, the
LHS of \eqref{sum_f_inequality} is written as $I=I_1+I_2+I_3$,
\begin{align*}
	I_1 & = \sum_{i=1}^N (r^i)^{-1} Z^i \cdot (b(X^i) - b(Y^i))f'(r^i), \\
	I_2 & = \sum_{i=1}^N (r^i)^{-1} Z^i \cdot (\gamma^i(X) - \gamma^i(Y)) f'(r^i), \\
	I_3 & = 2\sigma^2\sum_{i=1}^N \lambda^2(Z^i) f''(r^i).
\end{align*}
Now we estimate $I_1,I_2,I_3$ respectively.
\begin{itemize}
\item Estimate $I_1$: By the definition of $\kappa(r)$ in \eqref{kappa},
\begin{equation}
	I_1 \Le -\frac{\sigma^2}2\sum_{i=1}^N 
	r^i \kappa(r^i)f'(r^i).
	\label{proof:sum_f_inequality_I_1}
\end{equation}
\item Estimate $I_2$: Using the Lipschitz condition in Assumption \ref{assumption:gamma} and $f(r) \Ge \varphi_0r/4$,
\begin{equation}
	I_2 \Le \sum_{i=1}^N |\gamma^i(X) - \gamma^i(Y)| \Le L\sum_{i=1}^N r^i \Le 
	\frac{4L}{\varphi_0}\sum_{i=1}^N f(r^i).
	\label{proof:sum_f_inequality_I_2}
\end{equation}
\item Estimate $I_3$: Using the estimation of $f''(r)$ in \eqref{f_inequality},
\begin{align}
	I_3 & \Le \frac{\sigma^2}2\sum_{i=1}^N
		r^i \kappa(r^i) \lambda^2(Z^i)f'(r^i)  
	- c_0\sigma^2\sum_{i=1}^N \lambda^2(Z^i) f(r^i) \notag \\
	& = \frac{\sigma^2}2\sum_{i=1}^N r^i\kappa(r^i)f'(r^i)  
		- c_0\sigma^2\sum_{i=1}^N f(r^i) \notag \\
	& ~~~~ \underbrace{-\frac{\sigma^2}2\sum_{i=1}^N 
	r^i\kappa(r^i) (1-\lambda^2(Z^i))f'(r^i)}_{I_{31}} + \underbrace{c_0\sigma^2\sum_{i=1}^N 
			(1-\lambda^2(Z^i)) f(r^i)}_{I_{32}}.
	\label{proof:sum_f_inequality_I_3_split}
\end{align}
We estimate $I_{31}$ and $I_{32}$ in \eqref{proof:sum_f_inequality_I_3_split}.
\begin{itemize}
\item Estimate $I_{31}$:
Note that $1-\lambda^2(Z^i) = 0$ if $r_i\Ge\delta$, thus
\begin{align*}
	I_{31} & = -\frac{\sigma^2}2\sum_{i=1}^N 
	r^i \kappa(r^i) (1-\lambda^2(Z^i))f'(r^i)  \\
	& \Le
	\frac{\sigma^2}2\sum_{i:r^i<\delta}
	r^i \kappa(r^i)^-f'(r^i) \\
	& \Le \frac{\sigma^2}2\sum_{i:r^i<\delta}
	r^i\kappa(r^i)^- \\
	& \Le \frac{N\sigma^2}2\sup_{r<\delta}
	\Big(r\kappa(r)^-\Big).
\end{align*}
\item Estimate $I_{32}$: In a similar way, using $f(r)\Le r$ one obtains
\begin{align*}
	I_{32} & =  c_0\sigma^2\sum_{i=1}^N  (1-\lambda^2(Z^i)) f(r^i) \\
	& \Le c_0\sigma^2\sum_{i:r^i<\delta} f(r^i) \\
	& \Le c_0N\sigma^2\delta.
\end{align*}
\end{itemize}
From the definition of $m(\delta)$ in \eqref{m_delta}, one obtains the estimate of $I_3$:
\begin{equation}
	I_3 \Le \frac{\sigma^2}2\sum_{i=1}^N
		r^i \kappa(r^i)f'(r^i) -
		c_0\sigma^2\sum_{i=1}^N  f(r^i) + Nm(\delta).
	\label{proof:sum_f_inequality_I_3}
\end{equation}
\end{itemize}
Summation over the estimates
\eqref{proof:sum_f_inequality_I_1}\eqref{proof:sum_f_inequality_I_2}\eqref{proof:sum_f_inequality_I_3}
of $I_1,I_2,I_3$ gives
\begin{equation}
	I \Le - \bigg(c_0\sigma^2-\frac{4L}{\varphi_0}\bigg)\sum_{i=1}^N f(r^i) + Nm(\delta).
\end{equation}
When the Lipschitz constant $L<c_0\varphi_0\sigma^2/8$, one has
\begin{equation}
	I \Le -\frac{c_0\sigma^2}2 \sum_{i=1}^N f(r^i) + Nm(\delta) = Nm(\delta) - c\sum_{i=1}^N f(r^i),
\end{equation}
which is exactly the result we need.\\[12pt]
\textbf{Proof of Lemma \ref{lemma:moment}}
Consider the stochastic processes $X_t$ and $\tilde X_t$ evolved by the IPS \eqref{IPS} and the RB--IPS \eqref{RBM_IPS} respectively, with the initial distribution $\nu\in\P_1$. For convenience, we unify \eqref{IPS}\eqref{RBM_IPS} in the form of the product model \eqref{product_model}.\\[6pt]
(i) By choosing a smooth function $f(x) = \sqrt{|x|^2+1}$, 
each $f(X_t^i)$ satisifies the SDE
\begin{equation}
	\d f(X_t^i) = 
		b^i(X_t)\cdot \nabla f(X_t^i)\d t + 
		\frac{\sigma^2}2\Delta f(X_t^i)\d t + 
		\nabla f(X_t^i)\cdot \sigma\d W_t^i,
\end{equation}
where $\Delta = \nabla \cdot \nabla$ is the Laplacian operator in $\mathbb R^d$. Taking the expectation, one obtains
\begin{equation}
	\frac{\d}{\d t} \E[f(X_t^i)] = 
	\E\bigg(
		b^i(X_t)\cdot \nabla f(X_t^i)
		+\frac{\sigma^2}2\Delta f(X_t^i)
	\bigg).
\end{equation}
Note that the 1st, 2nd derivatives of $f(x)$ and the perturbation $\gamma^i(x)$ are uniformly bounded (we have assumed $K(\cdot)$ to be bounded in Assumption \ref{assumption:interact}), for each $i\in\{1,\cdots,N\}$ there is
\begin{equation}
	\frac{\d}{\d t}\E[f(X_t^i)] \Le 
	\E\Big(
	b(X_t^i)\cdot \nabla f(X_t^i)
	\Big)+C =
	\E\bigg(
	\frac{b(X_t^i)\cdot X_t^i}{\sqrt{|X_t^i|^2+1}}
	\bigg)+C.
	\label{proof:ef}
\end{equation}
Under Assumption \ref{assumption:kappa},
we claim that there exists constants $C,\beta>0$ such that
\begin{equation}
	\frac{x\cdot b(x)}{\sqrt{|x|^2+1}} \Le 
	C - \beta \sqrt{|x|^2+1},~~~~\forall x\in\mathbb R^d.
	\label{proof:C_beta}
\end{equation}
In fact, from $\kappa(r)^- = 0$ for $r\Ge R_0$, one has
\begin{align*}
	x\cdot b(x) & \Le x\cdot b(0) - \frac{\sigma^2}2 \kappa(r) |x|^2 \\
	& = x\cdot b(0) - 
	\frac{\sigma^2}2 \kappa(r)^+ |x|^2 + 
	\frac{\sigma^2}2 \kappa(r)^- |x|^2 \\
	& \Le x\cdot b(0) - \frac{\sigma^2}2 \kappa(r)^+ 
	|x|^2 + \frac{\sigma^2R_0}2 \kappa(r)^- |x| \\
	& \Le C|x| - \frac{\sigma^2}2 \kappa(r)^+|x|^2,
\end{align*}
where $x^+=\max\{x,0\}$ denotes the positive part of $x\in\mathbb R$. Thus \eqref{proof:C_beta} holds true.
Combining \eqref{proof:C_beta}\eqref{proof:ef} yields
\begin{equation}
	\frac{\d}{\d t}\E[f(X_t^i)] \Le C - \beta\cdot \E[f(X_t^i)],~~~~
	i=1,\cdots,N.
	\label{proof:efi}
\end{equation}
For the IPS $\{X_t\}_{t\Ge0}$, define
\begin{equation}
	m(t) = \E\bigg(\frac1N \sum_{i=1}^N f(X_t^i)\bigg),
	~~~~\forall t\Ge0.
\end{equation}
Since $\nu$ is the initial distribution of $X_0$,
clearly $m(t)$ is an upper bound of
\begin{equation}
	\int_{\mathbb R^{Nd}}
	\bigg(
	\frac1N\sum_{i=1}^N |x^i|
	\bigg) (\nu p_t) (\d x).
\end{equation}
Summation over $i\in\{1,\cdots,N\}$ in \eqref{proof:efi} gives
\begin{equation}
	m'(t) \Le C - \beta\cdot m(t).
	\label{proof:m_rate}
\end{equation}
Hence $m(t)$ is finite for all $t\Ge0$, and by Gronwall's inequality,
\begin{equation}
	\varlimsup_{t\rightarrow\infty}
	\int_{\mathbb R^{Nd}} \bigg(\frac1N\sum_{i=1}^N
	|x^i|\bigg)(\nu p_t)(\d x) 
	\Le
	\varlimsup_{t\rightarrow\infty} m(t) 
	\Le \frac{C}{\beta}.
\end{equation}
Now one may just take $D > C/\beta$ in Lemma \ref{lemma:moment}.\\[6pt]
(ii) The moment estimate for the RB--IPS can be derived in a similar way. For convenience, define the filtration $\mathcal F_n$ by
\begin{equation}
	\mathcal F_n = \sigma(\nu,
	\{W_s\}_{0\Le s\Le t_n},
	\{\D_k\}_{0\Le k\Le n}).
	\label{proof:filtration_F}
\end{equation}
That is, $\mathcal F_n$ is determined by the initial distribution $\nu$ of $\tilde X_0$, the Wiener process $W_t$ before time $t_n$, and the divisions in the first $n+1$ time steps.
For the RB--IPS $\{\tilde X_t\}_{t\Ge0}$, define
\begin{equation}
	\tilde m(t) = 
	\mathbb E
	\bigg(
	\frac1N\sum_{i=1}^N f(\tilde X_t^i)
	\bigg),~~~~\forall t\Ge0.
	\label{proof:m}
\end{equation}
Under the condition of the filtration $\mathcal F_n$, define
\begin{equation}
	\tilde m(t|\mathcal F_n) := 
	\E\bigg(\frac1N \sum_{i=1}^N f(\tilde X_t^i)
	\bigg| \mathcal F_n
	\bigg),
	~~~~t\in[t_n,t_{n+1}).
\end{equation}
With fixed division $\D_n$ of the index set $\{1,\cdots,N\}$, $\tilde X_t$ in the time interval $[t_n,t_{n+1})$ is evolved by \eqref{RBM_IPS}, and Assumption \ref{assumption:gamma} still holds true with the constant $L=2L_K$. Therefore,
similarly with (\ref{proof:m_rate}), one obtains
\begin{equation}
	\tilde m'(t|\mathcal F_n) \Le C - \beta \cdot \tilde m(t|\mathcal F_n),
	~~~~\forall t\in[t_n,t_{n+1}).
	\label{proof:mF}
\end{equation} 
Taking the expectation over $\mathcal F_n$ in \eqref{proof:mF} gives
\begin{equation}
	\tilde m'(t) \Le C - \beta \cdot \tilde m(t),~~~~
	t\in[t_n,t_{n+1}).
	\label{proof:tilde_mC}
\end{equation}
Integrating \eqref{proof:tilde_mC} in the time interval $[t_n,t_{n+1})$ gives
\begin{equation}
	\tilde m((n+1)\tau) \Le e^{-\beta\tau}
	\tilde m(n\tau) + 
	\frac{C}{\beta}(1-e^{-\beta\tau}),~~~~
	\forall n\Ge0.
\end{equation}
Hence $\tilde m(t)$ is finite for all integers $n\Ge0$, and
by Gronwall's inequality,
\begin{equation}
	\varlimsup_{n\rightarrow\infty} 
	\tilde m(n\tau) \Le \frac{C}{\beta}.
\end{equation}
Now one may just take $D > C/\beta$ in Lemma \ref{lemma:moment}.\\[12pt]
\textbf{Proof of Lemma \ref{lemma:moment_alpha}}
We first estimate $\E|X_t^i|^\alpha$ for the IPS. 
By It\^o calculus,
\begin{equation}
	\frac{\d}{\d t}\E|X_t^i|^\alpha = 
		\alpha\cdot\E \Big\{
		|X_t^i|^{\alpha-2} \Big(
			X_t^i\cdot b(X_t^i) + 
			X_t^i\cdot \gamma^i(X_t)
		\Big) \Big\} + 
		\frac12\alpha(\alpha+d-2)\sigma^2\E|X_t^i|^{\alpha-2},
	\label{proof:X_alpha_rate_1}
\end{equation}
where the perturbation $\gamma^i(x)$ is given by \eqref{gamma_IPS}. By the definition of $\kappa(r)$, one has
\begin{equation}
	-x\cdot (b(x)-b(0)) \Ge \frac{\sigma^2}2\kappa(|x|)|x|^2,~~~~
	\forall x\in\mathbb R^d.
\end{equation}
Hence the drift force part in \eqref{proof:X_alpha_rate_1} is bounded by
\begin{equation}
	|X_t^i|^{\alpha-2} X_t^i\cdot b(X_t^i) 
	\Le C|X_t^i|^{\alpha-1} - \frac{\sigma^2}2\kappa(|X_t^i|)
	|X_t^i|^\alpha.
	\label{proof:drift_estimate}
\end{equation}
Since $\gamma^i(x)$ is uniformly bounded according to Assumption \ref{assumption:interact}, the perturbation part in \eqref{proof:X_alpha_rate_1} is bounded by
\begin{equation}
	|X_t^i|^{\alpha-2}
	X_t^i\cdot \gamma^i(X_t)
	\Le C|X_t^i|^{\alpha-1}.
	\label{proof:perturbation_estimate}
\end{equation}
Combining \eqref{proof:drift_estimate}\eqref{proof:perturbation_estimate}, from \eqref{proof:X_alpha_rate_1} one deduces that
\begin{equation}
	\frac{\d}{\d t}\E|X_t^i|^\alpha \Le
	-\frac{\alpha\sigma^2}{2} 
	\E\big(\kappa(|X_t^i|) 
	|X_t^i|^\alpha\big) + C\big(\E|X_t^i|^{\alpha-1}+\E|X_t^i|^{\alpha-2}\big)
\end{equation} 
Since $\kappa(r) \Ge\delta$ for $r\Ge R_0$ and $\kappa(r)$ has a lower bound for $r>0$, one has
\begin{equation}
	-\kappa(r) r^\alpha = (\delta -\kappa(r))
	r^\alpha - \delta r^\alpha \Le 
	C - \delta r^\alpha,~~~~
	\forall r\Ge0,
\end{equation}
which implies
\begin{align*}
	-\E\big(\kappa(|X_t^i|) 
	|X_t^i|^\alpha\big) & \Le
	C -\delta \cdot\E|X_t^i|^\alpha
\end{align*}
Therefore by choosing $c = \alpha\sigma^2\delta/2$, one has
\begin{equation}
	\frac{\d}{\d t}\E|X_t^i|^\alpha \Le 
	-c\cdot\E|X_t^i|^\alpha + C(\E|X_t^i|^{\alpha-1} + \E|X_t^i|^{\alpha-2}+1)
	\label{proof:X_alpha_rate_2}
\end{equation}
Using interpolation inequality, $\E|X_t^i|^{\alpha-1}$ and $\E|X_t^i|^{\alpha-2}$ can be bounded by $\E|X^i|^{\alpha}$ plus constant. Therefore, \eqref{proof:X_alpha_rate_2} implies
\begin{equation}
	\frac{\d}{\d t}\E|X_t^i|^\alpha \Le 
	-\frac{c}2\cdot\E|X_t^i|^\alpha + C
\end{equation}
for some constant $C$, which is exactly the result we need. For the RB--IPS \eqref{RBM_IPS}, the perturbation $\gamma^i(x)$ given by \eqref{gamma_RBM} is still bounded by $2L_K$, thus the proof above still holds true for the RB--IPS in the time interval $[t_n,t_{n+1})$ under the condition of $\mathcal F_n$.
\end{appendices}
\section*{Acknowledgements}
S. Jin was partially supported by the NSFC grant No.~12031013, the Shanghai Municipal Science and Technology Major Project, and Science and Technology
Commission of Shanghai Municipality grant No. 20JC1414100, (2021SHZDZX0102). 
L. Li was partially sponsored by the Strategic Priority Research Program of Chinese Academy of Sciences, Grant No.~XDA25010403, and NSFC 11901389, 12031013.
Z. Zhou was partially supported by the National Key R\&D Program of China, Project Number 2021YFA1001200 and the NSFC grant, No.~12171013.

\bibliography{reference.bib}
\bibliographystyle{unsrt}
\end{document}